\theoremstyle{break}
\newtheorem{thm}{Theorem}[section]
\newtheorem{lemma}[thm]{Lemma}
\newtheorem{prop}[thm]{Proposition}
\newtheorem{cor}[thm]{Corollary}
\theoremstyle{remark}
\newtheorem{example}[thm]{Example}
\theoremstyle{definition}
\newtheorem{definition}[thm]{Definition}
\newcommand{\R}{\Bbb{R}}
\newcommand{\N}{\Bbb{N}}
\newcommand{\Z}{\Bbb{Z}}
\newcommand{\supp}{\mathop\mathrm{supp}}
\newcommand{\conv}{\mathop\mathrm{conv}}
\newcommand{\bconv}{\mathop\mathrm{bconv}}
\newcommand{\rint}{\mathop\mathrm{rint}}
\newcommand{\LT}{\mathop\mathrm{LT}}
\newcommand{\tf}{\tilde{f}}
\newcommand{\tr}{\tilde{r}}
\newcommand{\br}{\hat{r}}
\newcommand{\talpha}{\tilde{\alpha}}
\newcommand{\tM}{\widetilde{M}}
\newcommand{\tGamma}{\widetilde{\Gamma}}
\newcommand{\tgamma}{\widetilde{\gamma}}
\renewcommand{\epsilon}{\varepsilon}
\renewcommand{\phi}{\varphi}
\title[Notes on Newton diagrams and sums of squares]{Notes on optimality conditions using Newton diagrams and sums of squares}
\author{Yoshiyuki Sekiguchi}
\address{Graduate School of Marine Science and Technology, Tokyo
University of Marine Science and Technology, Etchujima 2-1-8, Koto,
Tokyo 135-8533, Japan}
\email{yoshi-s@kaiyodai.ac.jp}
\subjclass[2000]{%
90C46, 
13J30, 
14M25  
}
\keywords{Polynomial optimization, Newton diagram, optimality
conditions, sums of squares}
\begin{document}

\maketitle

\begin{abstract}
 We consider relationships between optimality conditions using Newton
 diagrams and sums of squares of polynomials and power series.
\end{abstract}

\section{Introduction}

We consider the set of sums of squares of real polynomials $\R[x]$
denoted by
$\sum\R[x]^2$ 
and the quadratic module 
$M(g_1,\ldots,g_l)=\{\sum_i\sigma_ig_i\mid \sigma_i\in \sum\R[x]^2\}$
generated by $g_i\in \R[x], i = 1,\ldots, l$.
In addition, let sums of squares of power series $\R[[x]]$ 
be denoted by $\sum\R[[x]]^2$
and $\widetilde{M}(g_1,\ldots,g_l)=\{\sum_i \tau_i g_i\mid \tau_i\in \sum\R[[x]]^2\}$.
It is well known that these play important roles in polynomial
optimization problems; see \cite{Mar} and references therein.
On the other hand, optimality conditions in optimization theory
can be used to give sufficient conditions for a function to belong to quadratic
modules
generated by constraint functions (sos-representability).

A polynomial optimization problem is the following:
\begin{align*}
   \mathrm{(POP)}\ & \min\quad f(x)\\
 &  \text{ s.t.}\quad  g_i(x)\geq 0, i = 1, \ldots l, \\
 &  \phantom{\text{ s.t.}\quad} h_j(x)=0, j = 1, \ldots, m,
\end{align*}
where $f,g_i,h_j\in\R[x]=\R[x_1,\ldots,x_n]$.
We say the second order condition holds at $z$
if $z$ is a minimizer and there exist $\lambda_i\geq 0,
\mu_i\in \R$ such that
$\nabla f(z) = \sum_i\lambda_i\nabla g_i(z) + \sum_j\mu_j\nabla h_j(z)$, 
$\lambda_i g_i(z) = 0$
and 
\[
 \nabla^2\left(f - \sum_i\lambda_i g_i - \sum_j \mu_j h_j \right)(z)
\]
is positive definite on the subspace 
$\{x\in \R^n \mid \lambda_i\nabla g_i(z)x=0, \nabla h_j(z)x = 0\}$.
Then \cite{BSch}, \cite{N} showed that 
if the second order condition and some constraint qualification conditions hold
at each global minimizer, then 
$f - f_{\mathrm{min}}$ is contained in the quadratic module $M(g_1, \ldots, g_l) + \langle
h_1,\ldots, h_m \rangle$, where $f_{\mathrm{min}}$ is the global minimum.


We are interested in relationships between other optimality conditions
and sos-representability.
In this notes, we investigate an optimality condition
using Newton diagrams given in \cite{V}.


In \cite[Theorem 4.12]{Y},
the author had imposed the condition that
\begin{center}
$(*)$\quad for each maximal face $\gamma$ of $\Gamma(f)$,
 $f_\gamma(x)\in \rint\left(\sum\R[x]^2_{\frac{1}{2}\gamma}\right)$.
\end{center}
However Condition $(*)$ is insufficient for the theorem to hold.
The stronger condition
\[
 (**)\quad f_\Gamma(f)\in
\rint\left(\sum\R[x]^2_{\frac{1}{2}\gamma}\right),
\]
was essentially used in the last sentence of the proof.
In fact, Christoph Schulze gave the counterexample
\[
 f(x,y,z) = x^2y^2z^2+2x^8+3y^{10}+4z^{14}+4x^4z^7+6y^5z^7,
\]
which fulfills all conditions of \cite[Theorem 4.12]{Y} but is not a sum of
squares.
We are grateful to him for reporting and for interesting discussions.
To fix it, we provide a corrected version in Section 4.
The results before the proof of Theorem 4.12 of \cite{Y} remain to hold
without any change.
We correct Theorem 4.17, 4.21, Corollary 4.22 and Theorem 5.3 in
\cite{Y}
by replacing Condition $(*)$ with Condition $(**)$.

\section{Preliminaries}

For a polyhedral convex set $P\subset \R^n$, $F\subset P$ is called a \textit{face} of $P$, 
if there exists a supporting hyperplane $H$ such that $F = P\cap H$.

For $f\in \R[x]$, the \textit{support} of $f$ is the set of all exponents of
monomials of $f$ and be denoted by $\supp f$.
For $\alpha\in \Z_+^n$, $|\alpha|=a_1 + \cdots + a_n$ and $\alpha$ is 
said to be \textit{even} if all coordinates are even.
Let
\begin{align*}
 \Delta(f) &= \bigcup\{\alpha + \R_+^n \mid \alpha \in \supp f\},\\
 \Delta_E(f)&=\bigcup\{\alpha + \R_+^n \mid \alpha \in  \supp
 f\cap (2\Z)^n\}. 
\end{align*}
The convex hull $\conv \Delta(f)$ of $\Delta(f)$ 
is called the \textit{Newton polyhedron} of $f$.
       The \textit{Newton diagram} $\Gamma(f)$ is the union of the
       compact faces of $\conv \Delta(f)$.
For $\gamma \subset \R_+^n$, define 
$f_\gamma = \sum\{f_\alpha x^\alpha\mid \alpha \in \gamma \cap \supp f\}$ 
and $\R[x]_\gamma$ as the set of
polynomials whose  supports are included in 
$\gamma \cap \Z^n$.
The polynomial $p_\gamma=\sum_{\alpha\in \gamma \cap (2\Z)^n}x^\alpha$ is called
the \textit{principal polynomial} of $\gamma$.

We consider the \textit{finest locally convex topology} on $\R[x]$; see
\cite{CMN}, \cite{Schea}.
This topology is Hausdorff and each finite dimensional subspaces of $\R[x]$
inherits the Euclidean topology, 
and every converging sequence in $\R[x]$ is contained in a finite
dimensional subspace.
For a subset $C$ of a finite dimensional subspace of $\R[x]$,
the \textit{relative interior} $\rint C$ is defined as 
the interior of $C$ with respect to the minimal finite dimensional 
subspace which includes $C$.

\section{Necessary condition}

Vasil'ev showed a necessary condition 
for locally isolated minimality using Newton diagrams \cite[Theorem 1.5 (1)]{V}.
\begin{thm}
[Vasil'ev]
 Let $f\in \R[x]$ with $f(0)=0$ have an isolated minimum at $0$. 
Then 
\begin{enumerate}
 \item $\Gamma(f)$ meets all coordinate axes;
 \item Every vertex of $\Gamma(f)$ is even;
 \item For each vertex $\alpha$ of $\Gamma(f)$, $f_\alpha>0$;
 \item For each face $\gamma$ of $\Gamma(f)$, $f_\gamma(x)\geq0,\
       \forall x\in \R^n$.
\end{enumerate}

\end{thm}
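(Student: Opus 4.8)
The plan is to derive all four statements from a single device: substituting a monomial curve into $f$ and reading off its lowest-order term, which is governed by a compact face of the Newton polyhedron. First I would record two standard facts about $N=\conv\Delta(f)=\conv(\supp f)+\R_+^n$: (a) every vertex of $N$ lies in $\supp f$, since an extreme point of $\conv(\supp f)+\R_+^n$ uses none of the $\R_+^n$ summand and is a vertex of the polytope $\conv(\supp f)$; and (b) a nonempty face $F$ of $N$ is compact exactly when it is exposed by some $w\in\R_{>0}^n$, i.e.\ $F=\{\beta\in N:\langle w,\beta\rangle=d(w)\}$ with $d(w)=\min_{\beta\in N}\langle w,\beta\rangle=\min_{\beta\in\supp f}\langle w,\beta\rangle$. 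Indeed, up to sign any supporting functional of $N$ is $\ge 0$ coordinatewise (otherwise $\langle w,\cdot\rangle$ is unbounded on $N$, whose recession cone is $\R_+^n$); if some $w_i=0$ then $\beta+\N e_i\subset F$ whenever $\beta\in F$, so $F$ is unbounded; and conversely, if $w\in\R_{>0}^n$ then a recession direction $v\in\R_+^n$ of the exposed face satisfies $\langle w,v\rangle=0$, hence $v=0$, so $F$ is bounded. In particular the faces of $\Gamma(f)$ are the compact faces of $N$, and the vertices of $\Gamma(f)$ are the vertices of $N$.

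The key lemma is then: given a compact face $\gamma$, choose $w\in\R_{>0}^n$ exposing it and put $d=d(w)$; for every $s=(s_1,\dots,s_n)\in\R^n$ the curve $x(t)=(s_1t^{w_1},\dots,s_nt^{w_n})$ tends to $0$ as $t\to0^+$ and
\[
 t^{-d}f(x(t))=\sum_{\beta\in\supp f}f_\beta s^\beta\,t^{\langle w,\beta\rangle-d}\longrightarrow\sum_{\beta\in\gamma\cap\supp f}f_\beta s^\beta=f_\gamma(s)\qquad(t\to0^+),
\]
because $\langle w,\beta\rangle-d$ vanishes for $\beta\in\gamma$ and is strictly positive otherwise (if some $s_i=0$ this only kills extra terms, not the limit). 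From this I would prove (4) first: for a face $\gamma$ and arbitrary $s\in\R^n$, since $0$ is a local minimizer with $f(0)=0$ we have $f(x(t))\ge0$ for small $t>0$, hence $t^{-d}f(x(t))\ge0$, so the lemma gives $f_\gamma(s)\ge0$; as $s$ is arbitrary, $f_\gamma\ge0$ on $\R^n$.

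Conditions (2) and (3) then fall out: if $\alpha$ is a vertex of $\Gamma(f)$ it is a vertex of $N$, so $\{\alpha\}$ is a face with $f_{\{\alpha\}}=f_\alpha x^\alpha$ and $f_\alpha\ne0$ by (a); applying (4), $f_\alpha x^\alpha\ge0$ on $\R^n$, so evaluating at $(1,\dots,1)$ gives $f_\alpha\ge0$, hence $f_\alpha>0$, which is (3); and if some $\alpha_j$ were odd, flipping the sign of $x_j$ would force $f_\alpha x^\alpha<0$, a contradiction, so $\alpha$ is even, which is (2). For (1), if $\Gamma(f)$ missed the $x_k$-axis then $f$ would contain no monomial $x_k^m$ with $m\ge1$ — otherwise the one of least degree would place a vertex of $N$ on the axis, hence in $\Gamma(f)$ — and then $f$ would vanish identically on the $x_k$-axis, contradicting that $0$ is an \emph{isolated} minimum. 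I expect the only genuine subtlety to be the polyhedral bookkeeping in step (b): once "compact face $\Leftrightarrow$ strictly positive normal" is established and one checks that the monomial curve detects an arbitrary point $s$ (zero coordinates being harmless), each of the four items is a one-line limiting argument.
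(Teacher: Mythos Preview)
Your argument is correct. The paper does not actually prove this statement: it is quoted as Vasil'ev's result \cite[Theorem 1.5 (1)]{V} and used as background, so there is no in-paper proof to compare against. That said, your approach via the monomial curve $x(t)=(s_1t^{w_1},\dots,s_nt^{w_n})$ with $w$ a strictly positive exposing normal of the face $\gamma$, extracting $f_\gamma(s)$ as the coefficient of the lowest power of $t$, is exactly the classical device Vasil'ev uses; the paper alludes to this when it says, in the proof of Theorem~\ref{thm:necessity}, that properties (1) and (2) follow from nonnegativity by ``the proof of \cite[Proposition 1.2]{V}''. Your polyhedral bookkeeping in (a) and (b) is sound, the derivation of (4) and then (2)--(3) from the curve lemma is clean, and your handling of (1) --- that a missing axis forces $f$ to vanish identically along it, contradicting isolatedness --- is the right use of the ``isolated'' hypothesis. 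One cosmetic remark: in (1) you might make explicit the one-line check that the least-degree pure power $m_0e_k$ is genuinely a vertex of $N$ (any convex decomposition in $N\subset\R_+^n$ is pinned to the $k$-axis by nonnegativity of the other coordinates), but you already signalled this.
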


The following theorem gives necessary conditions using Newton diagrams for sos-representability.
\begin{thm}
\label{thm:necessity}
  Let $f\in \R[x]$ with $f(0)=0$ be a sum
 of square 
 polynomials. Then
\begin{enumerate}
 \item Every vertex of $\Gamma(f)$ is even.
 \item For each vertex $\alpha$ of $\Gamma(f)$, $f_\alpha>0$.
 \item For each face $\gamma$ of $\Gamma(f)$, $f_\gamma(x)\in\sum\R[x]^2$.
\end{enumerate}
\end{thm}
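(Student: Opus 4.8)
The plan is to deduce all three statements from a single observation about the lowest-degree part of a sum of squares with respect to a positive weight. Write $f=\sum_{k=1}^{N}p_k^2$ with $p_k\in\R[x]$; we may assume $f\neq 0$, since otherwise $\Gamma(f)=\emptyset$ and there is nothing to prove. For a weight vector $\ell\in\R_{>0}^n$ and a nonzero $p=\sum_\alpha p_\alpha x^\alpha\in\R[x]$, put $\nu_\ell(p)=\min\{\langle\ell,\alpha\rangle:p_\alpha\neq 0\}$ and let $\mathrm{in}_\ell(p)=\sum\{p_\alpha x^\alpha:\langle\ell,\alpha\rangle=\nu_\ell(p)\}$ be the (nonzero) lowest $\ell$-weight part of $p$, which is $\ell$-weight homogeneous of weight $\nu_\ell(p)$. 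Since $\R[x]$ is a domain, $\nu_\ell(pq)=\nu_\ell(p)+\nu_\ell(q)$ and $\mathrm{in}_\ell(pq)=\mathrm{in}_\ell(p)\,\mathrm{in}_\ell(q)$, while for a sum one only has $\nu_\ell(p+q)\ge\min(\nu_\ell(p),\nu_\ell(q))$, with strict inequality possible because of cancellation.

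First I would prove: for every $\ell\in\R_{>0}^n$, the exposed face $\gamma_\ell=\{y\in\New(f):\langle\ell,y\rangle=\min_{z\in\New(f)}\langle\ell,z\rangle\}$ satisfies $f_{\gamma_\ell}\in\sum\R[x]^2$. Set $\mu=\min_k\nu_\ell(p_k)$ and $K=\{k:\nu_\ell(p_k)=\mu\}$. For $k\in K$ we have $\nu_\ell(p_k^2)=2\mu$ and $\mathrm{in}_\ell(p_k^2)=\mathrm{in}_\ell(p_k)^2$, whereas $\nu_\ell(p_k^2)>2\mu$ for $k\notin K$; hence the $\ell$-weight-$2\mu$ part of $f=\sum_k p_k^2$ is exactly $\sum_{k\in K}\mathrm{in}_\ell(p_k)^2$. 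This is a sum of squares of \emph{nonzero} polynomials, hence itself nonzero, because a sum of squares of real polynomials vanishes only when every summand does. So no cancellation occurs: $\nu_\ell(f)=2\mu$ and $\mathrm{in}_\ell(f)=\sum_{k\in K}\mathrm{in}_\ell(p_k)^2\in\sum\R[x]^2$. Finally, since $\New(f)=\conv(\supp f)+\R_+^n$ and $\ell\in\R_{>0}^n$, one has $\min_{\alpha\in\supp f}\langle\ell,\alpha\rangle=\min_{z\in\New(f)}\langle\ell,z\rangle=\nu_\ell(f)$, so $\gamma_\ell\cap\supp f=\{\alpha\in\supp f:\langle\ell,\alpha\rangle=\nu_\ell(f)\}$ and therefore $\mathrm{in}_\ell(f)=f_{\gamma_\ell}$, proving the claim.

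Next I would note that the faces of $\Gamma(f)$ are precisely the sets $\gamma_\ell$ with $\ell\in\R_{>0}^n$: a compact face is cut out by a supporting hyperplane with inner normal $\ell$; since $\New(f)$ contains $x+\R_+^n$ for each of its points $x$, boundedness from below of $\langle\ell,\cdot\rangle$ forces $\ell$ to have nonnegative coordinates, and if some $\ell_i$ vanished then $\gamma_\ell$ would contain a translate of the ray $\R_+e_i$ and fail to be compact. Together with the claim this yields statement~(3). For (1) and (2), apply (3) with $\gamma$ a vertex $\alpha$ of $\Gamma(f)$: then $f_\gamma=f_\alpha x^\alpha\in\sum\R[x]^2$. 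It remains to observe that a nonzero scalar multiple $cx^\alpha$ of a monomial lies in $\sum\R[x]^2$ only if $\alpha$ is even and $c>0$: if $cx^\alpha=\sum_j q_j^2$, then, since the extreme points of $\conv\bigl(\bigcup_j 2\supp q_j\bigr)$ occur with nonzero coefficient in $\sum_j q_j^2$, this set must reduce to the single point $\alpha$; hence every $q_j$ is a monomial $c_jx^{\alpha/2}$ with $\alpha/2\in\Z^n$, so $\alpha$ is even and $c=\sum_j c_j^2>0$ (nonzero because $f_\alpha\neq 0$). Statements (1) and (2) follow.

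The argument is short, and its only delicate point is the interface between the geometry of $\New(f)$ and the algebra of initial forms: that strictly positive weight vectors detect exactly the compact faces, and --- the real content --- that the lowest $\ell$-weight part of $\sum_k p_k^2$ is the sum of squares of the lowest $\ell$-weight parts of the $p_k$, with no cancellation. This is where the sum-of-squares hypothesis is used essentially, and it is precisely the ingredient that upgrades Vasil'ev's inequalities $f_\gamma(x)\ge 0$ to the membership $f_\gamma\in\sum\R[x]^2$; everything else is bookkeeping with supports and Newton polytopes.
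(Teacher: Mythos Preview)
Your proof is correct and follows essentially the same route as the paper for part~(3): pick a weight vector cutting out the face, grade $f=\sum_k p_k^2$ by that weight, and observe that the lowest-weight part is $\sum_{k\in K}\mathrm{in}_\ell(p_k)^2$ with no cancellation, hence a sum of squares. The paper phrases this with an integer normal $A$ and the equation $v=2w$, but the content is identical.

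The one genuine difference is how (1) and (2) are obtained. The paper simply invokes Vasil'ev's argument, which uses only the nonnegativity of $f$. You instead deduce (1) and (2) from (3) applied to a vertex, via the standard Newton-polytope fact that extreme points of $2\conv\bigl(\bigcup_j\supp q_j\bigr)$ survive in $\sum_j q_j^2$. This is a nice self-contained alternative that avoids the external citation, at the cost of using the full SOS hypothesis rather than mere nonnegativity for (1) and (2). Your treatment of why compact faces correspond exactly to strictly positive inner normals is also more explicit than the paper's, which just asserts that one ``may assume'' $A\in\Z_+^n\setminus\{0\}^n$.
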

\begin{proof}
As a easy consequence of the proof of  \cite[Proposition 1.2]{V}, we
 have that nonnegativity of $f$ implies the properties $(1)$ and $(2)$

We will show the properties $(3)$.

For each face $\gamma \subset \Gamma(f)$, 
let $\Delta =\{\alpha \in \Z_+^n\mid A_1\alpha_1 + A_2\alpha_2 + \cdots + A_n\alpha_n = v\}$
be the supporting hyperplane including the face $\gamma$
 but not $\Gamma(f)\setminus \gamma$.
Here we may assume $A = (A_1,\ldots, A_n)\in \Z_+^n\setminus\{0\}^n$
and hence $v = \min\{A\cdot\alpha \mid
 \alpha \in \supp f\}$, where the dot product is defined by $A\cdot\alpha=\sum_{i}A_i\alpha_i$.
We can write $f= f_v + f_{v+1} + \cdots$, where $f_{v'}$ is a polynomial 
each of whose exponents $\alpha$ satisfy
 $A\cdot\alpha = v'$. Then we have $f_v = f_\gamma$.

Next, let $f = \sum_i^s g_i^2$. We define $w_i = \min\{A\cdot\alpha \mid 
\alpha \in \supp g_i \}$, $w = \min\{w_1, \ldots w_s\}$.
For $i = 1,\ldots, s$, $g_i$ is decomposed as $g_i = g_{i, w} + g_{i, w+1} + \cdots + g_{i, w + t}$ for some $t_i\in \N$.
 Then we write
 \[
 f = \sum_i^s\left(g_{i,w} + g_{i,w+1} + \ldots + g_{i,w + t_i}\right)^2
 =\sum_i^s g_{i,w}^2 + \tf,
 \]
 where all exponents $\alpha$ of $\tf$ 
satisfies $A\cdot\alpha > 2w$.
 Since $\sum_{i=1}^s g_{i,w}^2 \neq 0$, we have $v \leq 2w$.
 If $v < 2w$, there exists $\beta \in \gamma$ such that
 $A\cdot\beta = v < 2w$ and 
 $x^\beta$ is a monomial of $f - \sum_i^s g_{i,w}^2 = \tf$. This is a
 contradiction and we have $v = 2w$.
Therefore $f_\gamma = \sum_i^s g_{i,w}^2$.

\end{proof}

\section{Sufficient condition}

We investigate sufficient conditions for a polynomial to be a sum
of squares of power series $\R[[x]]$ using Newton diagrams.
We present a sufficient condition for locally isolated
minimality by Vasil'ev \cite[Theorem 1.5 (2)]{V}.

\begin{thm}
 [Vasil'ev]
 \label{thm:V}
 Let $f\in \R[x]$ with $f(0)=0$.
\begin{enumerate}
 \item $\Gamma(f)$ meets all coordinate axes.
 \item Every vertex of $\Gamma(f)$ is even.
 \item For each vertex $\alpha$ of $\Gamma(f)$, $f_\alpha>0$.
 \item For each face $\gamma$ of $\Gamma(f)$, $f_\gamma(x)>0, \forall x
       \text{ with }  x_1\cdots x_n\neq 0$.
\end{enumerate}
Then $f$ has an isolated minimum at $0$.
 \end{thm}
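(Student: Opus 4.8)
The plan is to show that under hypotheses (1)--(4) there is a constant $c>0$ and a neighborhood $U$ of $0$ such that $f(x)\ge c\,\phi(x)$ on $U$, where $\phi$ is some fixed strictly positive ``weighted-homogeneous-like'' function that vanishes only at $0$; this forces an isolated minimum. The natural choice for $\phi$ is the principal polynomial associated to the Newton diagram, e.g. $\phi(x)=\sum_{\alpha} x^\alpha$ over a suitable set of even vertices and face-exponents, or more simply $\phi(x)=\sum_{\alpha\in V} x^\alpha$ where $V$ runs over the vertices of $\Gamma(f)$ (all even by (2), all with positive coefficient by (3)). Hypothesis (1) guarantees that for each coordinate direction $e_i$ there is a vertex on the $x_i$-axis, so $\phi$ contains a pure power $x_i^{2k_i}$ for every $i$; hence $\phi(x)=0$ iff $x=0$ near $0$, and $\phi>0$ on a punctured neighborhood.

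The key technical device is a curve-selection / monomial-curve argument. First I would suppose, for contradiction, that $0$ is not an isolated minimum: then either $f$ takes a nonpositive value arbitrarily close to $0$, or $f$ has another zero arbitrarily close to $0$. In either case one can (by the curve selection lemma, or by a direct compactness argument on the space of ``orders'' / exponent weights) produce a real analytic arc $x(s)=(a_1 s^{p_1}+\cdots,\,\ldots,\,a_n s^{p_n}+\cdots)$, $s\to 0^+$, with all $a_i\neq 0$ if we first reduce to the interior of an orthant, along which $f(x(s))\le 0$. The exponent vector $p=(p_1,\ldots,p_n)$ of the leading term, after scaling, selects a face $\gamma$ of $\Gamma(f)$: namely $\gamma$ is the face of $\conv\Delta(f)$ on which the linear functional $\alpha\mapsto p\cdot\alpha$ attains its minimum $v$. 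Substituting the arc into $f$ and collecting the lowest power of $s$, the leading coefficient of $f(x(s))$ is exactly $f_\gamma(a_1,\ldots,a_n)\cdot s^{v}$ (plus higher order), where $(a_1,\dots,a_n)$ is a point with all coordinates nonzero. By hypothesis (4), $f_\gamma(a)>0$, so $f(x(s))>0$ for small $s>0$ — contradiction. The cases where the arc hits a coordinate hyperplane are handled by induction on $n$ (or by noting that hypotheses (2),(3) force the restriction of $f$ to coordinate subspaces to again satisfy the hypotheses, using that vertices on the axes are even with positive coefficient).

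The main obstacle is making the curve-selection step clean: one must ensure that the arc can be taken with a \emph{rational} leading exponent vector $p$ (so that it genuinely picks out a face $\gamma$ of the rational polyhedron $\Gamma(f)$), and one must control the possibility that the arc approaches $0$ inside a coordinate hyperplane, where hypothesis (4) says nothing directly. I expect the argument to run by induction on the number of variables: the base case $n=1$ is immediate since $\Gamma(f)$ meeting the single axis at an even vertex with positive coefficient makes $f(x)=cx^{2k}+\cdots$ with $c>0$; for the inductive step, an arc staying in $\{x_i=0\}$ reduces to the $(n-1)$-variable statement for $f|_{x_i=0}$, whose Newton diagram is the corresponding face of $\Gamma(f)$ and inherits (1)--(4), while an arc with all coordinates eventually nonzero is dispatched by the leading-term computation above invoking (4). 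Assembling these cases and verifying that every arc falls into one of them (e.g. by reordering coordinates so that the first $k$ components are nonzero) is the bookkeeping that the full proof must carry out.
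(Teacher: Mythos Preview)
The paper does not supply its own proof of this theorem: it is quoted from Vasil'ev with the citation \cite[Theorem 1.5 (2)]{V} and no argument is given in the text. So there is nothing in the paper to compare your proposal against.

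That said, your sketch is essentially the standard route to Vasil'ev's result and is sound. An analytic arc $x(s)\to 0$ with $x_i(s)=a_is^{p_i}+\cdots$, $a_i\neq 0$, $p_i>0$, selects the compact face $\gamma$ of $\Gamma(f)$ on which $p\cdot\alpha$ is minimized, and the leading coefficient of $f(x(s))$ is $f_\gamma(a)>0$ by (4); arcs confined to $\{x_i=0\}$ are handled by induction on $n$. Two points are worth making explicit in a full write--up. First, the arc produced by the semialgebraic curve selection lemma is real analytic in the parameter, so each $x_i(s)$ has an \emph{integer} leading exponent $p_i>0$ automatically; no separate rationality argument is needed, and since all $p_i>0$ the minimizing face is necessarily compact. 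Second, for the inductive step one should note that $\conv\Delta(f)\cap\{\alpha_i=0\}$ is itself a face of $\conv\Delta(f)$ (the hyperplane $\{\alpha_i=0\}$ supports $\R_+^n$), hence equals $\conv\Delta(f|_{x_i=0})$; consequently every vertex and every face of $\Gamma(f|_{x_i=0})$ is already a vertex, respectively a face, of $\Gamma(f)$, and conditions (1)--(4) pass to the restriction. With these two checks in place your outline becomes a complete proof.
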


\begin{example}
 \[
 f(x,y) = x^6 + x^4y + x^3y^3 + x^2y^2 + y^4
\]
The vertices of the Newton diagram of $f$ are $(0,4)$, $(2,2)$ and $(6,0)$. 
The compact faces consist of $\gamma_1=\{t(0,4)+ (1-t)(2,2)\mid 0\leq t \leq
 1\}$, $\gamma_2=\{t(2,2)+ (1-t)(6,0)\mid 0\leq t \leq
 1\}$ and the vertices. Here we have for $x,y$ with $xy\neq 0$, 
\begin{align*}
 f_{\gamma_1} & = x^2y^2 + y^4>0\\
 f_{\gamma_2} & = x^6 + x^4y + x^2y^2 = x^2\left\{\left(x^2 +
 \frac{1}{2}y\right)^2 + \frac{3}{4}y^2\right\}>0.
\end{align*}
Therefore $(0,0)$ is an isolated minimum of $f$.
\end{example}

\subsection{Simple Newton diagrams}
We seek conditions which are analogous to the one by Vasil'ev.
We consider the following well-known sufficient condition
from the point of view of Newton diagrams; see e.g.\ \cite[Lemma 9.5.1]{Mar}.
 \begin{lemma}
  \label{lemma:pd}
 Let $f\in \R[x]$.
 Suppose $f = \sum_k f_k$ be the expansion of
 its homogeneous components where $\deg f_k = k$.
 If $f_0=f_1 = 0$ and $f_2$ is a positive definite form,
 then $f\in \sum\R[[x]]^2$.
 \end{lemma}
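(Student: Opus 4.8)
The plan is to write $f = f_2 + g$ where $g = \sum_{k\ge 3} f_k$ collects the higher-order terms, and to complete the square at the level of power series. Since $f_2$ is a positive definite quadratic form, after a linear change of coordinates we may assume $f_2 = x_1^2 + \cdots + x_n^2$; note a linear change of coordinates preserves membership in $\sum\R[[x]]^2$, so this reduction is harmless. The idea is then to find power series $\phi_1,\ldots,\phi_n \in \R[[x]]$ with no constant term such that $f = \sum_{i=1}^n (x_i + \phi_i)^2$, which exhibits $f$ as a sum of squares of power series. Expanding the right-hand side, this amounts to solving $g = 2\sum_i x_i\phi_i + \sum_i \phi_i^2$ for the $\phi_i$, where every monomial appearing on either side has order at least $3$.

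The key step is to construct the $\phi_i$ order by order. Write $\phi_i = \sum_{d\ge 2}\phi_i^{(d)}$ with $\phi_i^{(d)}$ homogeneous of degree $d$; then $\sum_i x_i\phi_i^{(d)}$ is the degree-$(d+1)$ part of $2\sum_i x_i\phi_i$, while $\sum_i \phi_i^2$ contributes only in degrees $\ge 4$ and in degree $e$ depends only on the $\phi_i^{(d)}$ with $d \le e-2$. Thus, comparing homogeneous parts of degree $e \ge 3$ in the equation $g = 2\sum_i x_i\phi_i + \sum_i \phi_i^2$ gives, at each stage, an equation of the form
\[
g_e - (\text{known terms from lower-order } \phi_i) = 2\sum_{i=1}^n x_i\,\phi_i^{(e-1)},
\]
where $g_e$ is the degree-$e$ homogeneous component of $g$. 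So the whole construction reduces to the linear-algebra fact that every homogeneous polynomial $h$ of degree $e \ge 3$ can be written as $\sum_{i=1}^n x_i q_i$ with each $q_i$ homogeneous of degree $e-1$: indeed, grouping the monomials of $h$ by (say) the smallest index of a variable dividing them, or simply writing each monomial $x^\alpha$ with $|\alpha|\ge 1$ as $x_j\cdot x^{\alpha - e_j}$ for some $j$ with $\alpha_j \ge 1$, gives such a decomposition. This determines the $\phi_i^{(e-1)}$ (non-uniquely, but any choice works), and induction on $e$ produces the full power series $\phi_i$.

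The main obstacle, and the only place requiring care, is the bookkeeping of which $\phi_i^{(d)}$ feed into the degree-$e$ equation: one must check that the cross terms $\sum_i \phi_i^2$ never involve $\phi_i^{(e-1)}$ itself (which is true because $\phi_i^{(e-1)}$ has degree $\ge 3$, so $\phi_i^{(e-1)}\phi_j^{(d)}$ has degree $\ge e+2 > e$), so that at each stage the equation really is linear in the unknowns $\phi_i^{(e-1)}$ with all other data already determined. Once this is in place, the recursion goes through unobstructed, the resulting formal series $\phi_i$ lie in $\R[[x]]$, and $f = \sum_i(x_i+\phi_i)^2 \in \sum\R[[x]]^2$, as claimed. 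No convergence issue arises since we work in the formal power series ring throughout.
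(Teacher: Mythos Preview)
Your proof is correct, modulo a harmless arithmetic slip in the last paragraph: $\phi_i^{(e-1)}$ has degree exactly $e-1\ge 2$ (not $\ge 3$), so $\phi_i^{(e-1)}\phi_j^{(d)}$ with $d\ge 2$ has degree $\ge e+1$ (not $\ge e+2$); either way the degree exceeds $e$ and the recursion goes through.

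The paper does not actually prove this lemma: it is quoted as well-known with a reference to Marshall \cite[Lemma 9.5.1]{Mar}. Your argument is the classical formal Morse lemma / completion-of-squares proof, and is entirely self-contained; it even yields the sharp conclusion that $f$ is a sum of exactly $n$ squares in $\R[[x]]$. By contrast, the paper's own machinery for the generalization in Theorem~\ref{thm:homogeneous} (lowest part $f_{2m}\in\rint(\sum\R[x]_m^2)$) proceeds quite differently, subtracting a small multiple of the principal polynomial $p_\Gamma$ and then handling the tail by combinatorial decompositions together with Lemma~\ref{lemma:GM}. Your method hinges on the surjectivity of $(h_1,\dots,h_n)\mapsto \sum_i x_i h_i$ onto forms of positive degree, which is special to the quadratic case and does not extend directly to $m>1$; the paper's approach is designed to scale to that setting.
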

Here we note that if $f_2$ is positive definite, then $f_2\in
\rint(\sum\R[x]_1^2)$ \cite[Corollary 2.5, Remark 2.6]{GM}.
Thus the lemma tells us that $f\in \sum\R[[x]]^2$ if the Newton diagram $\Gamma:=\Gamma(f)$ is 
contained in the plane $|\alpha| =2$ and $f_\Gamma$ is
contained in $\rint(\sum\R[x]_1^2)$.
From this observation, we first obtain an extension of the lemma in the case that the Newton
diagram
is contained in a plane which is parallel to $|\alpha| =2$.
\begin{thm}
 \label{thm:homogeneous}
 Let $f_{2m}$ be the lowest homogeneous part of $f\in \R[x]$.
 If $f_{2m}\in \rint\left(\sum\R[x]_m^2\right)$, then $f\in \sum\R[[x]]^2$.
\end{thm}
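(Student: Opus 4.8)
The plan is to reduce Theorem~\ref{thm:homogeneous} to Lemma~\ref{lemma:pd} by a monomial substitution that sends the homogeneous form $f_{2m}$, which lives on the plane $|\alpha| = 2m$, to a positive definite quadratic form. The natural idea is a ``square root of variables'' substitution: informally, set $x_i = y_i^{1/m}$, or more precisely work with the map on exponents $\alpha \mapsto \alpha/m$ and verify this is realizable by an honest power series substitution after a suitable scaling. Concretely, since $f_{2m} \in \rint(\sum \R[x]_m^2)$, write $f_{2m} = \sum_{k} p_k^2$ with $p_k$ homogeneous of degree $m$, and note that the relative-interior hypothesis means that $f_{2m}$ is, up to adding a small positive multiple of the principal polynomial $p_\Gamma = \sum_{\alpha \in \Gamma \cap (2\Z)^n} x^\alpha$ (where $\Gamma = \Gamma(f_{2m})$ is the single face $|\alpha| = 2m$), still a sum of squares; this is the analogue of the ``positive definite'' strengthening of ``nonnegative'' that is recorded for quadratics just before the theorem.

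First I would make the substitution precise. Let $\Gamma = \Gamma(f)$; by hypothesis $\Gamma$ is the simplex face of $\conv \Delta(f)$ lying in the hyperplane $|\alpha| = 2m$, and $f_\Gamma = f_{2m}$. I would introduce new variables and the substitution $\phi$ determined on monomials by $x^\alpha \mapsto y^\alpha$ composed with a grading rescaling, arranged so that a degree-$2m$ monomial maps to a degree-$2$ object and the tail $f - f_{2m}$ (whose support lies strictly above the plane $|\alpha| = 2m$) maps into terms of order $> 2$. The cleanest device is a curve/blow-up substitution $x_i \mapsto t\, y_i$ in an auxiliary variable $t$ is \emph{not} quite enough because it only rescales; instead the right move is to use that $\R[[x]]$ is closed under the substitution $x_i \mapsto x_i$ but to apply Lemma~\ref{lemma:pd} in a clever grading: define a new polynomial ring graded so that $\deg x_i = 1$ but measure homogeneity by the linear functional $|\alpha|$, then observe that $f = f_{2m} + (\text{higher } |\alpha|)$ and that $f_{2m} \in \rint(\sum\R[x]_m^2)$ forces, via the identity $\sum\R[x]_m^2$ being exactly the image under $x^\alpha \mapsto x^{\alpha}$-type Veronese of $\sum\R[x]_1^2$ restricted appropriately, that a Veronese-type change of variables turns the pair $(f_{2m}, f)$ into the pair $(f_2, f)$ of Lemma~\ref{lemma:pd}. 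I would spell this out by checking that the Veronese embedding $v_m\colon x \mapsto (x^\alpha)_{|\alpha|=m}$ pulls back a positive definite quadratic form in the $\binom{n+m-1}{m}$ Veronese coordinates to precisely an element of $\rint(\sum\R[x]_m^2)$, and conversely, and that the higher-order tail of $f$ pulls back to a power series with no constant or linear term under this grading.

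The key steps in order: (i) identify $\Gamma(f)$ with the single face $\{|\alpha| = 2m\}$ and record $f_\Gamma = f_{2m}$; (ii) express the hypothesis $f_{2m} \in \rint(\sum \R[x]_m^2)$ via the Veronese map as: the image of $f_{2m}$ under the inverse of $v_m^\ast$ is a positive definite quadratic form $Q$ on the Veronese variables restricted to the Veronese variety, and more strongly that $Q$ can be taken positive definite on the full ambient space after extending by zero outside the variety --- this last point is where the relative interior is essential and where I expect the argument to require care; (iii) write $f = f_{2m} + r$ with every exponent of $r$ satisfying $|\alpha| \ge 2m+1$, so that $r = v_m^\ast(\tilde r)$ for a power series $\tilde r$ all of whose terms have degree $\ge 2$ in a suitable grading (or one absorbs $r$ directly); (iv) apply Lemma~\ref{lemma:pd} in the Veronese variables to conclude the pullback is in $\sum\R[[y]]^2$, then push forward: a sum of squares of power series in the Veronese variables, restricted to the Veronese variety, is a sum of squares of power series in $x$.

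The main obstacle will be step (ii): relating $\rint(\sum\R[x]_m^2)$ to positive definiteness on the ambient Veronese space. The Veronese variety is a proper subvariety, so a form that is positive definite \emph{on the variety} need not extend to a positive definite form on all of $\R^N$, and Lemma~\ref{lemma:pd} wants genuine positive definiteness. I expect this is resolved exactly because the membership is in the \emph{relative} interior of the sum-of-squares cone (not merely the interior, which would be empty): being in $\rint(\sum\R[x]_m^2)$ means one can subtract a small positive multiple of every square $x^\gamma$ with $|\gamma| = 2m$, $\gamma$ even, i.e.\ a small positive multiple of the principal polynomial of the face, and stay inside the cone --- and this ``slack in all even-monomial directions'' is precisely the extra room needed to make the Veronese pullback argument go through without requiring ambient positive definiteness. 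Making that equivalence airtight, using \cite[Corollary 2.5, Remark 2.6]{GM}, is the technical heart of the proof; everything else is bookkeeping about supports, gradings, and the fact that $\R[[x]]$ is closed under the relevant substitutions.
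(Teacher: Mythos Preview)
Your Veronese reduction breaks at step (iii). The pullback $v_m^*\colon \R[[y]] \to \R[[x]]$ sending $y_\alpha \mapsto x^\alpha$ for $|\alpha| = m$ has image exactly the power series all of whose terms have total degree divisible by $m$. Hence the tail $r = f - f_{2m}$, which in general contains monomials of degree $2m+1$, cannot be written as $\tilde r \circ v_m$ for any $\tilde r \in \R[[y]]$, and there is no $F\in\R[[y]]$ with $f = F\circ v_m$ to which Lemma~\ref{lemma:pd} would apply. No ``clever grading'' repairs this: the image of $v_m^*$ is a proper subring of $\R[[x]]$ for $m\ge 2$, and $f$ simply need not lie in it. (Your worry about step (ii), by contrast, is unfounded: $f_{2m} \in \rint(\sum\R[x]_m^2)$ is equivalent, via $\rint L(C) = L(\rint C)$ for linear $L$ and convex $C$, to the existence of a positive definite Gram matrix, i.e.\ $f_{2m}(x) = v_m(x)^T Q\, v_m(x)$ with $Q$ positive definite on all of $\R^N$.)

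The paper does not attempt a substitution. From $f_{2m}\in\rint(\sum\R[x]_m^2)$ it peels off $2\epsilon p_\Gamma$ via Lemma~\ref{lemma:principal}; one copy of $\epsilon p_\Gamma$ (which contains each $\epsilon x_i^{2m}$) absorbs arbitrarily large pure powers $M_{2k}x_i^{2k}$ into $\sum\R[[x]]^2$, and the other copy, together with those injected pure powers and Lemma~\ref{lemma:GM}, forces the entire tail plus corrections to be a \emph{polynomial} sum of squares by an explicit degree-by-degree completion of squares. If you want an argument closer in spirit to Lemma~\ref{lemma:pd}, drop the Veronese and complete squares directly in $\R[[x]]$: write $f_{2m} = \sum_j w_j^2$ with $\{w_j\}$ a basis of $\R[x]_m$ (this is exactly what a positive definite Gram matrix gives), note that every monomial of degree $\ge 2m+1$ lies in the ideal $(w_1,\dots,w_N)$ with cofactors of order $\ge m+1$, and solve $f = \sum_j(w_j + s_j)^2$ for power series $s_j$ of order $\ge m+1$ by successive approximation in the $\langle x_1,\dots,x_n\rangle$-adic topology.
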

To show this, we need the following lemmas.
In addition, we will use the well-known fact that for any $u\in \R[x]$ with $u(0) = 0$, 
\[
 1 + u,\  \frac{1}{1 + u}\in \sum\R[[x]]^2,
\]
see e.g. \cite[Section 1.6]{Mar}.

 \begin{lemma}
  \label{lemma:GM}
 Suppose $f\in \R[x]$ is a homogeneous polynomial of degree $2d$
 and $\{e_i\}$ is the canonical basis of $\Z^n$.
 Then there exists $\tM>0$ such that 
 $f + \sum_{i=1}^n M x^{2de_i}\in \sum\R[x]^2$ for $M>\tM$.
 \end{lemma}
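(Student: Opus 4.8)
The plan is to isolate the single form $p:=\sum_{i=1}^n x^{2de_i}$ and prove that it lies in $\rint\bigl(\sum\R[x]_d^2\bigr)$, i.e.\ in the interior of the sums‑of‑squares cone inside the space of forms of degree $2d$ (this cone is full‑dimensional there, since squares of degree‑$d$ monomials span $\R[x]_{2d}$ by polarization). Granting this, the lemma follows by a scaling argument: $\sum\R[x]_d^2$ is a closed pointed cone in a finite‑dimensional space, and $\tfrac1M f + p \to p$ as $M\to\infty$, so $\tfrac1M f + p \in \sum\R[x]_d^2$ for all large $M$, whence $f + \sum_{i=1}^n M x^{2de_i} = M\bigl(\tfrac1M f + p\bigr)\in\sum\R[x]^2$.

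To see $p\in\rint\bigl(\sum\R[x]_d^2\bigr)$ I would invoke the duality for closed convex cones: $p$ is interior iff $L(p)>0$ for every nonzero $L$ in the dual cone, and the dual cone of $\sum\R[x]_d^2$ consists exactly of the linear functionals $L$ on $\R[x]_{2d}$ for which $B_L(h):=L(h^2)$ is a positive semidefinite quadratic form on $W:=\R[x]_d$. Since $L(p)=\sum_i B_L(x_i^d)\ge 0$ automatically, it suffices to prove: if such an $L$ satisfies $B_L(x_i^d)=0$ for all $i$, then $L=0$.

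For this, positive semidefiniteness forces $B_L(w)=0$ to imply $w\in\operatorname{rad}B_L$, so each $x_i^d\in\operatorname{rad}B_L$, i.e.\ $L(x_i^d\,h)=0$ for every $h\in W$. The heart of the argument is a propagation step: if $x^\beta\in\operatorname{rad}B_L$ with $\beta_j\ge 2$, then for any $k$ one may write $x^{2(\beta-e_j+e_k)}=x^\beta\cdot x^{\beta-2e_j+2e_k}$ as a product of two degree‑$d$ monomials, so $B_L\bigl(x^{\beta-e_j+e_k}\bigr)=0$ and $x^{\beta-e_j+e_k}\in\operatorname{rad}B_L$ as well. Starting from $x_1^d,\dots,x_n^d$ and transferring one unit of exponent at a time (always keeping the source coordinate at least $2$), one reaches every degree‑$d$ monomial, so $\operatorname{rad}B_L=W$, i.e.\ $B_L\equiv 0$; then $L$ vanishes on every square, hence, polarizing and using that squares span $\R[x]_{2d}$, $L=0$.

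The main obstacle is precisely this interior claim — equivalently, the assertion that the zero form is the only positive semidefinite quadratic form on $\R[x]_d$ vanishing at all of $x_1^d,\dots,x_n^d$ — and inside it the combinatorial reachability fact about unit exponent transfers. The remaining ingredients (closedness and pointedness of the sums‑of‑squares cone, the cone–interior duality, the polarization identity) are standard. It is also worth disposing of the case $n=1$ separately, where $p=x_1^{2d}$ is itself a square and the conclusion is immediate.
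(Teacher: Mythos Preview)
Your argument is correct. The paper, however, does not actually prove this lemma: its entire proof is the sentence ``This is easily implied by Ghasemi--Marshall \cite[Theorem~2.1]{GM},'' where an explicit coefficient criterion for membership in $\sum\R[x]^2$ is established that yields the statement directly (and in fact gives an effective $\tM$ in terms of the coefficients of $f$). Your route is therefore genuinely different and self-contained: you replace the external citation by the interiority claim $\sum_i x_i^{2d}\in\rint\bigl(\sum\R[x]_d^2\bigr)$, proved via cone duality and the radical--propagation step, and then finish with the scaling $\tfrac1M f+p\to p$. What the citation buys is brevity and a constructive bound; what your approach buys is that nothing outside standard convex duality is needed. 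Two minor remarks: (i) closedness and pointedness of the cone are not used in your scaling step---openness of $\sum\R[x]_d^2$ at $p$ already suffices; (ii) your interiority statement is a sharper cousin of Lemma~\ref{lemma:principal}(1) applied to the face $\gamma=\{|\alpha|=2d\}$, since there the principal polynomial $p_\gamma$ contains \emph{all} even monomials of degree $2d$, whereas you show that the pure powers alone already land in the interior.
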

 \begin{proof}
  This is easily implied
by Ghasemi-Marshall \cite[Theorem 2.1]{GM}.
 \end{proof}
\begin{lemma}
\label{lemma:principal}
Let $f\in \R[x]$ and $\gamma$ be a face of $\Gamma(f)$. 
Then we have the following:
\begin{enumerate}
\item  The principal polynomial $p_\gamma$
 of $\gamma$ lies in
	       $\rint\left(\sum\R[x]^2_{\frac{1}{2}\gamma} \right)$.

\item $f_\gamma \in \rint\left(\sum\R[x]^2_{\frac{1}{2}\gamma} \right)$
	       if and only if $f_\gamma - \epsilon p_\gamma\in 
	       \sum\R[x]_{\frac{1}{2}\gamma}^2$ for sufficiently small $\epsilon>0$.
\end{enumerate} 
\end{lemma}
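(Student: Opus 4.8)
The plan is to prove the two parts in sequence, with part (1) serving as the key ingredient for the "only if" direction of part (2). Throughout, write $\tfrac12\gamma=\{\tfrac12\alpha\mid\alpha\in\gamma\}$ and note that since $\gamma$ is a face of $\Gamma(f)$ lying in a supporting hyperplane $\{A\cdot\alpha=v\}$ with $A\in\Z_+^n\setminus\{0\}$, every monomial $x^\beta$ with $\beta\in\tfrac12\gamma\cap\Z^n$ satisfies $A\cdot\beta=v/2$, so products of two such monomials land on $\gamma$; this is what makes $\sum\R[x]^2_{\frac12\gamma}$ a well-defined object sitting inside $\R[x]_\gamma$, and $p_\gamma$ lies in it.

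For part (1), I would first identify the minimal finite-dimensional subspace $V$ of $\R[x]$ containing $\sum\R[x]^2_{\frac12\gamma}$; it is spanned by the monomials $x^{\beta+\beta'}$ with $\beta,\beta'\in\tfrac12\gamma\cap\Z^n$. The claim is that $p_\gamma=\sum_{\alpha\in\gamma\cap(2\Z)^n}x^\alpha$ is an interior point of the sos cone relative to $V$. The natural approach is to exhibit, for every polynomial $q\in V$, a ball around $p_\gamma$ in the direction of $q$ that stays in the cone: concretely, show $p_\gamma+tq\in\sum\R[x]^2_{\frac12\gamma}$ for all small $|t|$. Write $q=\sum_{\beta,\beta'}c_{\beta\beta'}x^{\beta+\beta'}$; then $p_\gamma+tq$ is represented by the Gram matrix $I+t\,C$ (symmetrized) indexed by the lattice points of $\tfrac12\gamma$, where the identity $I$ comes from $\sum_\beta (x^\beta)^2$ — here one uses that every $\alpha\in\gamma\cap(2\Z)^n$ of the form $2\beta$ contributes, and more care is needed for $\alpha\in\gamma\cap(2\Z)^n$ \emph{not} of the form $2\beta$, which must be hit by suitable $\tfrac12(x^\beta\pm x^{\beta'})^2$ combinations. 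For $|t|$ small enough $I+tC\succeq 0$, giving the sos representation; hence $p_\gamma\in\rint(\sum\R[x]^2_{\frac12\gamma})$. This is essentially the cited fact \cite[Corollary 2.5, Remark 2.6]{GM} transported to the face $\gamma$, and the argument is a direct adaptation.

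For part (2), the "if" direction is almost immediate: if $f_\gamma-\epsilon p_\gamma\in\sum\R[x]^2_{\frac12\gamma}$ for some $\epsilon>0$, then $f_\gamma=(f_\gamma-\epsilon p_\gamma)+\epsilon p_\gamma$ is a sum of a point of the sos cone and $\epsilon$ times an interior point of that cone (by part (1)), hence lies in the relative interior — this uses the standard convexity fact that cone $+\,\mathbb{R}_{>0}\cdot(\text{rint})\subset\text{rint}$. For the "only if" direction, suppose $f_\gamma\in\rint(\sum\R[x]^2_{\frac12\gamma})$. Since $p_\gamma$ lies in the same finite-dimensional subspace $V$, the segment from $f_\gamma$ in the direction $-p_\gamma$ stays in the relative interior for a short while: there is $\epsilon>0$ with $f_\gamma-\epsilon p_\gamma\in\rint(\sum\R[x]^2_{\frac12\gamma})\subset\sum\R[x]^2_{\frac12\gamma}$, which is exactly what is claimed.

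The main obstacle I anticipate is bookkeeping in part (1): one must be sure the "directions" $q$ range over precisely the span of $\sum\R[x]^2_{\frac12\gamma}$ and that every even exponent $\alpha\in\gamma\cap(2\Z)^n$ genuinely appears in $p_\gamma$'s Gram representation even when $\alpha$ is not twice a lattice point of $\tfrac12\gamma$ — i.e., verifying that the Gram matrix model of $\sum\R[x]^2_{\frac12\gamma}$ has $p_\gamma$ corresponding to a positive definite (not merely positive semidefinite) matrix, which is what forces relative-interiority rather than mere membership. Once that combinatorial point is pinned down, both directions of part (2) are soft convexity arguments.
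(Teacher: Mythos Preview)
Your approach is correct, and part~(2) coincides with the paper's convexity argument (which simply refers to \cite[Proposition~1.4]{CMN}). For part~(1), however, you take a genuinely different route. The paper argues monomial-by-monomial, following \cite[Proposition~5.5]{CLR}: for any $g\in\sum\R[x]^2_{\frac12\gamma}$ and any $a\in\supp g$, one writes $a=b_1+b_2$ with $b_i\in\frac12\gamma\cap\Z^n$, sets $\alpha_i=2b_i\in\gamma\cap(2\Z)^n$, and uses the identity
\[
x^{\alpha_1}+x^{\alpha_2}\pm 2x^a=\bigl(x^{\alpha_1/2}\pm x^{\alpha_2/2}\bigr)^2
\]
to conclude $p_\gamma\pm 2x^a\in\sum\R[x]^2_{\frac12\gamma}$; summing over the monomials of $g$ then gives $p_\gamma-\epsilon g$ sos for small $\epsilon>0$. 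Your Gram-matrix argument instead exploits that $p_\gamma$ admits the identity matrix as a Gram representation, so any symmetric perturbation $I+tC$ stays positive semidefinite for small $|t|$. The paper's argument is more elementary and yields explicit sos certificates; yours is cleaner once the linear-algebraic framework is in place and makes the ``relative interior $\Leftrightarrow$ positive definite Gram matrix'' heuristic precise.

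One remark that dissolves the obstacle you flag at the end: the map $\beta\mapsto 2\beta$ is a bijection between $\frac12\gamma\cap\Z^n$ and $\gamma\cap(2\Z)^n$ (if $\alpha\in\gamma\cap(2\Z)^n$ then $\alpha/2\in\frac12\gamma$ and $\alpha/2\in\Z^n$, and conversely). Hence every $\alpha\in\gamma\cap(2\Z)^n$ \emph{is} of the form $2\beta$ with $\beta$ a lattice point of $\frac12\gamma$, the Gram matrix of $p_\gamma$ in the monomial basis $\{x^\beta:\beta\in\frac12\gamma\cap\Z^n\}$ is exactly the identity, and no off-diagonal bookkeeping is needed.
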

\begin{proof}
The proof of $(1)$ is almost identical to the one given in \cite[Proposition 5.5]{CLR}.
 Let $g\in \sum\R[x]^2_{\frac{1}{2}\gamma}$.
Then $g = \sum_t h_t^2$ for some $h_t\in \R[x]_{\frac{1}{2}\gamma}$.
For each $a\in \supp g$, there exist $b_1,b_2\in \bigcup_t \supp h_t$ such that
$a = b_1 + b_2$.
Since we have $b_1, b_2\in \frac{1}{2}\gamma$, 
there exist $\alpha_1, \alpha_2\in \gamma\cap (2\Z)^n$ such that
$a = \frac{1}{2}(\alpha_1 + \alpha_2)$.
Since
\[
 x^{\alpha_1} + x^{\alpha_2} \pm 2x^a = \left(x^{\frac{1}{2}\alpha_1} 
\pm x^{\frac{1}{2}\alpha_2}\right)^2,
\]
we conclude that $p_\gamma \pm 2x^a\in \sum\R[x]^2_{\frac{1}{2}\gamma}$
and hence that $p_\alpha - \epsilon
 g\in \sum\R[x]^2_{\frac{1}{2}\gamma}$ for sufficiently small $\epsilon>0$.

For $(2)$, consider $V$ as the affine hull of
 $\sum\R[x]^2_{\frac{1}{2}\gamma}$ in the proof of \cite[Proposition 1.4]{CMN}.

\end{proof}

 \begin{proof}
  [Proof of Theorem $\ref{thm:homogeneous}$]
  Let $\Gamma=\Gamma(f_{2m})$. Since $f_{2m}\in
  \rint\left(\sum\R[x]_m^2\right)$,
  $\Gamma$ meets all coordinate axes and then $\Gamma=\Gamma(f)$.
 By $(2)$ of Lemma $\ref{lemma:principal}$, there exists $\epsilon>0$
 such that $f_{2m} - 2\epsilon p_\Gamma\in \sum\R[x]^2$.
 Let $t = \lceil\frac{1}{2}\deg f \rceil$ and $\{e_i\}$ be the canonical
 basis of $\Z^n$. Then we write
 \[
  f_{2m} = f_{2m} - 2\epsilon p_\Gamma + f^{(1)} + f^{(2)},
 \]
 where for $M_k>0$,
\begin{align*}
f^{(1)} & = \epsilon p_\Gamma
 -\sum_{k=m+1}^t\sum_{i=1}^n M_{2k} x^{2ke_i},\\
f^{(2)} & = \epsilon p_\Gamma + \sum_{k=m+1}^t\sum_{i=1}^n M_{2k} x^{2ke_i} + \sum_{|\alpha|\geq
 2m+1}f_\alpha x^\alpha.
\end{align*}
 Since $\Gamma$ meets all coordinate axes, $p_\Gamma$ contains $x^{2m
 e_i}$
 for all $i$. Then we have
 \[
 \epsilon x^{2me_i} - \sum_{k=m+1}^t M_{2k} x^{2ke_i}
 = x^{2me_i}\left(\epsilon - \sum_{k=m+1}^t M_{2k} x^{(2k-2m)e_i} \right)
 \in \sum\R[[x]]^2
 \]
 and hence $f^{(1)}\in \sum\R[[x]]^2$ for any $M_k>0$.
 
 Next we will show $f^{(2)}\in \sum\R[x]^2$.
 We claim that for arbitrary $C_\alpha>0$, there exists $D>0$
 such that
 \[
  T_k:=\sum_{\alpha:\text{even}\atop{|\alpha|=2k}} C_\alpha x^{\alpha}
 + \sum_{|\alpha|=2k+1}f_\alpha x^\alpha +
 \sum_{i=1}^n D x^{(2k+2)e_i}
 \]
 is contained in $\sum\R[x]^2$.
 Let $\alpha\in \Z_+^n$ with $|\alpha|=2k+1$.
For the index $s$ such that
 $\sum_{i}^s 2\alpha_i \leq 2k < \sum_{i}^{s+1} 2\alpha_i$,
 we define $\beta(\alpha),\beta'(\alpha)\in \Z_+^n$ as
 \[
  \beta(\alpha)_i=\begin{cases}
	  2\alpha_i,\quad i=1,\ldots, s\\
	  2k - \sum_i^s 2\alpha_i,\quad i = s+1\\
	  0, \quad \text{otherwise}
	  \end{cases}
 \]
 and $\beta'(\alpha) = 2\alpha - \beta(\alpha)$. Then $\beta(\alpha), \beta'(\alpha)$ are even, $|\beta(\alpha)|=2k, |\beta'(\alpha)|=2k+2$ and
 $2\alpha = \beta(\alpha) + \beta'(\alpha)$. Thus
 \[
 C_{\beta(\alpha)} x^{\beta(\alpha)} + f_\alpha x^\alpha
 = C_{\beta(\alpha)}\left(x^{\frac{\beta(\alpha)}{2}} +
 \frac{f_\alpha}{2C_{\beta(\alpha)}}x^{\frac{\beta'(\alpha)}{2}}\right)^2
 - \frac{f_\alpha^2}{4C_{\beta(\alpha)}}x^{\beta'(\alpha)}.
 \]
 Let
 \begin{align*}
  & S(k) = \left\{\alpha\in \supp f \mid |\alpha| = k \right\},\\
   & I = \{\beta\in \Z_+^n\mid \beta\text{ is even}, |\beta|=2k\}
  \setminus \left\{\beta(\alpha) \mid S(2k+1) \right\}.
 \end{align*} 
 Then we have
\begin{align*}
  T_k 
 & =  \sum_{\beta\in I}C_\alpha x^\alpha
 + \sum_{\alpha\in S(2k+1)}
 \left(C_{\beta(\alpha)}x^{\beta(\alpha)} + f_\alpha x^\alpha \right)
 +\sum_{i=1}^n D x^{(2k+2)e_i} 
\\
 & = \sum_{\beta\in I}C_\alpha x^\alpha
 + \sum_{\alpha\in S(2k+1))}
 C_{\beta(\alpha)}\left(x^{\frac{\beta(\alpha)}{2}} +
 \frac{f_\alpha}{2C_{\beta(\alpha)}}x^{\frac{\beta'(\alpha)}{2}}
 \right)^2 \\
& \hspace{15ex} + \left(\sum_{i=1}^n D x^{(2k+2)e_i} - \sum_{|\alpha|=2k+1 \atop{\alpha\in
 \supp f}}\frac{f_\alpha^2}{4C_{\beta(\alpha)}}x^{\beta'(\alpha)}\right).
\end{align*}
 Here the last parenthesis is a homogeneous polynomial of degree $2k+2$ 
 and Lemma $\ref{lemma:GM}$ implies that
 it is a sum of square polynomials for sufficiently large $D_i$.
 Thus the claim is proved.

 Now we have
\begin{align*}
  f^{(2)} & = \epsilon p_\Gamma
 + \sum_{k=m+1}^{t}\sum_{i=1}^n M_{2k} x^{2ke_i}
 + \sum_{k=m+1}^{t}\left(\sum_{|\alpha|=2k-1}f_\alpha x^\alpha
 + \sum_{|\alpha|=2k}f_\alpha x^\alpha\right)\\
 & = g^{(1)} + g^{(2)} + g^{(3)},
 \end{align*}
where
 \begin{align*}
 g^{(1)} & = \epsilon p_\Gamma + \sum_{|\alpha| = 2m+1}f_\alpha x^\alpha
 + \sum_{i=1}^n \frac{M_{2m+2}}{4} x^{(2m+2) e_i}\\
   g^{(2)} & = \sum_{k=m+2}^{t}\left(
 \sum_{\alpha\in S(2k-1)}x^{\beta(\alpha)}
 + \sum_{|\alpha|=2k-1}f_\alpha x^\alpha
 + \sum_{i=1}^n \frac{M_{2k}}{4} x^{2ke_i}
 \right) \\
  g^{(3)} & = \sum_{k=m+2}^{t}\left(
 \sum_{i=1}^n \frac{M_{2k-2}}{4} x^{(2k-2)e_i}
 - \sum_{\alpha\in S(2k-1)}x^{\beta(\alpha)}
 \right) \\
 g^{(4)} & = \sum_{k=m+1}^{t}\left(
 \sum_{|\alpha|=2k}f_\alpha x^\alpha
+ \sum_{i=1}^n \frac{M_{2k}}{2}x^{2ke_i}
 \right) + \sum_{i=1}^n \frac{M_{2t}}{4}x^{2te_i}.
\end{align*}
 Note that $\sum_{\alpha \in S(2k-1)} x^{\beta(\alpha)}$ is a homogeneous
 polynomials of degree $2k-2$.
 Again by Lemma $\ref{lemma:GM}$,
 there exist $\tM$ such that $g^{(3)},g^{(4)}\in \sum\R[x]^2$
 for $M_{2k}>\tM$.
 The claim above implies that there exist $M_{2m+2}>\tM$ such that
 $g^{(1)}\in \sum\R[x]^2$. Similarly for $k = m+2, \ldots t$,
 there exist $M_{2k}>\tM$ such that $g^{(2)}\in \sum\R[x]^2$.
 Therefore $f^{(2)}\in \sum\R[x]^2$ and hence $f\in \sum\R[[x]]^2$. 
 \end{proof}

\begin{example}
 Consider
 \[
  f(x,y,z) = 2x^6 + 2y^6 + 2z^6 + xy^3z^3 + x^2y^4z^3.
 \]
 The lowest homogeneous part is $2x^6+2y^6 +2z^6$, which is
 contained in $\rint(\sum\R[x]_2^2)$.
 The monomials $xy^3z^3$ and $x^2y^4z^3$ are not even and their exponent
 vectors are $(1,3,3)$ and $(2,4,3)$ respectively.
 Now we have
 \begin{align*}
  2(1,3,3) & = (2,6,6) = (2,4,0) + (0,2,6)\\
  2(2,4,3) & = (4,8,6) = (4,4,0) + (0,4,6)
 \end{align*}
 and then
\begin{align*}
 x^2y^4 + xy^3z^3 & =  \left(xy^2 + \frac{1}{2}yz^3\right)^2
 - \frac{1}{4}y^2z^6 \\
 x^4y^4 + x^2y^4z^3 & = \left(x^2y^2 + \frac{1}{2}y^2z^3\right)^2
 - \frac{1}{4}y^4z^6.
\end{align*}
 Now we have
 \begin{align*}
  f & = x^6 + y^6 + z^6 - 2a(x^8 + y^8 +z^8) - b(x^{10} + y^{10} +
  z^{10})\\
    & \hspace{5ex}+ (x^6 + y^6 + z^6 - \epsilon x^2y^4)\\
  & \hspace{5ex}+[\epsilon x^2y^4 + xy^3z^3 
  + a(x^8 + y^8 + z^8)] \\
  & \hspace{5ex }+ [x^4y^4 + x^2y^4z^3 + b(x^{10} +
  y^{10} + z^{10})]\\
& \hspace{5ex}  + [a(x^8 + y^8 + z^8) - x^4y^4]\\
 &  = x^6(1 - 2ax^2 - bx^4) + y^6(1 - 2ay^2 - by^4)
  + z^6(1 - 2az^2 - bz^4)\\
    & \hspace{5ex}+ (x^6 + y^6 + z^6 - \epsilon x^2y^4)\\
 & \hspace{5ex} + \left[\epsilon\left(xy^2 +
  \frac{1}{2\epsilon}yz^3\right)^2
  -\frac{1}{4\epsilon}y^2z^6 + a(x^8 + y^8 + z^8)
  \right]\\
  & \hspace{5ex}
  + \left[\left(x^2y^2 + \frac{1}{2}y^2z^3\right)^2
  - \frac{1}{4}y^4z^6 + b(x^{10} +  y^{10} + z^{10})\right]\\
& \hspace{5ex}  + \left[a(x^8 + y^8 + z^8) - x^4y^4\right]
 \end{align*}
 By Lemma $\ref{lemma:principal}$, there exists $\epsilon>0$ such that
 $x^6 + y^6 + z^6 - \epsilon x^2y^4\in \sum\R[x]^2$.
 Then by Lemma $\ref{lemma:GM}$, we can choose $a, b>0$ large enough so
 that the last three brackets are contained in $\sum\R[x]^2$.
 Therefore $f\in \sum\R[[x]]^2$.
\end{example}


\subsection{General Newton diagrams}
Next, we consider the case that the Newton diagram has several faces
which are contained in different planes.
For this general case, we need an assumption
on the distributions of exponent vectors of polynomials
in addition to conditions corresponding to those of Theorem $\ref{thm:V}$.

For $\alpha^1,\ldots,\alpha^t\in (2\Z_+)^n$, a \textit{binary convex combination}
of these points
is $\alpha\in \Z_+^n$ which can be written as
\[
 \alpha = \lambda_1 \alpha^1 + \cdots + \lambda_t \alpha^t,
\]
for some $\lambda_s>0, \sum_{s=1}^t\lambda_s = 1$
such that $2$-adic expansions of $\lambda_1,\ldots,\lambda_t$ have finite digits.
We also say that a binary convex combination has \textit{full digits}
if there exists $N\in \N$ such that
\begin{enumerate}
 \item $\lambda_s = \sum_{k=1}^N \delta_{sk}2^{-k}$ for $\delta_{sk}\in
\{0,1\}$, $s = 1,\dots,t$;
\item for each $k$, there exists $s$ with $\delta_{sk}=1$.
\end{enumerate}

For $\Delta_E\subset \Z_+^n$,
the set of all binary convex combinations of points in $\Delta_E\cap (2\Z_+)^n$
 which have full digits and are contained in $\Z^n$ is called the \textit{bisectional convex
       hull}
       of $\Delta_E$ and denoted by $\bconv\Delta_E$.
 Note that
 we have
 \[
 \Delta_E\cap Z^n \subset \bconv\Delta_E \subset \conv \Delta_E\cap Z^n. 
 \]
\begin{example}
 \label{ex:binary}
 Let $\Delta_E = \left\{(16,0) + \Z_+^2\right\}
 \cup \left\{(0,10) + \Z_+^2\right\}$. Then
 $(11,7)\in \bconv \Delta_E$. In fact, we have
\begin{align*}
 (11,7) & = \left(\frac{1}{2} + \frac{1}{2^3}\right)(16,0)
 + \frac{1}{2^2}(4,22) + \frac{1}{2^3}(0,12),
\end{align*}
 $(4,22), (0,12) \in \Delta_E\cap(2\Z_+)^n$ and it has full digits.
\end{example}
  \begin{prop}
   \label{prop:normal_binary}
   Let $\Delta_E\subset \Z_+^n$. Then we have
\begin{multline*}
    \bconv\Delta_E = \Z^n \cap \\
   \left\{\sum_{k=1}^N 2^{-k}\beta^k +
   2^{-N}\beta^{N+1}\mid \beta^k\in \Delta_E\cap(2\Z_+)^n,
   k=1,\ldots,N+1 \text{ for some } N\in \N\right\}.
\end{multline*}  
  \end{prop}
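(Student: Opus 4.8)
The plan is to prove the two inclusions separately; in each direction the integrality constraint is automatic, so the real content is a purely combinatorial identity about dyadic weights.

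Consider first ``$\supseteq$''. Let $\alpha\in\Z^n$ be of the form $\alpha=\sum_{k=1}^{N}2^{-k}\beta^k+2^{-N}\beta^{N+1}$ with all $\beta^k\in\Delta_E\cap(2\Z_+)^n$; since the $\beta^k$ are nonnegative and the coefficients positive, $\alpha\in\Z_+^n$. Take $t=N+1$, points $\alpha^s=\beta^s$, and weights $\lambda_s=2^{-s}$ for $s\le N$, $\lambda_{N+1}=2^{-N}$; these are positive, have terminating $2$-adic expansions, and sum to $(1-2^{-N})+2^{-N}=1$, so $\alpha$ is a binary convex combination of points of $\Delta_E\cap(2\Z_+)^n$. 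With the digit matrix $\delta_{sk}=1$ exactly when $s=k\le N$ or $(s,k)=(N+1,N)$, one has $\lambda_s=\sum_{k=1}^{N}\delta_{sk}2^{-k}$ and $\delta_{kk}=1$ for each $k\in\{1,\dots,N\}$, so the combination has full digits and $\alpha\in\bconv\Delta_E$.

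For ``$\subseteq$'', let $\alpha\in\bconv\Delta_E$, written as $\alpha=\sum_{s=1}^{t}\lambda_s\alpha^s$ with $\alpha^s\in\Delta_E\cap(2\Z_+)^n$, $\lambda_s>0$, $\sum_s\lambda_s=1$, and full digits $\lambda_s=\sum_{k=1}^{N}\delta_{sk}2^{-k}$. Interchanging the two summations gives $\alpha=\sum_{k=1}^{N}2^{-k}\gamma^k$, where $\gamma^k=\sum_{s:\,\delta_{sk}=1}\alpha^s$ is a sum of $c_k:=\#\{s:\delta_{sk}=1\}$ points of $\Delta_E\cap(2\Z_+)^n$. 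Full digits means $c_k\ge1$ for every $k\in\{1,\dots,N\}$, and $\sum_s\lambda_s=1$ translates to $\sum_{k=1}^{N}c_k2^{-k}=1$. The crux is the elementary rigidity statement, proved by induction on $N$, that positive integers $c_1,\dots,c_N$ with $\sum_{k=1}^{N}c_k2^{-k}=1$ must satisfy $c_k=1$ for $k<N$ and $c_N=2$: for $N=1$ this is $c_1=2$; for $N\ge2$ one has $\tfrac12\le\tfrac12 c_1=1-\sum_{k\ge2}c_k2^{-k}<1$, the strict inequality because $c_N\ge1$ contributes $2^{-N}>0$, so $c_1=1$, and multiplying the residual identity by $2$ reduces to the case $N-1$. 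Granting this, $\gamma^k$ is a single point $\beta^k\in\Delta_E\cap(2\Z_+)^n$ for $k<N$ while $\gamma^N=\beta^N+\beta^{N+1}$ for two such points, whence $\alpha=\sum_{k=1}^{N-1}2^{-k}\beta^k+2^{-N}(\beta^N+\beta^{N+1})=\sum_{k=1}^{N}2^{-k}\beta^k+2^{-N}\beta^{N+1}$, an element of the right-hand side (and in $\Z^n$ since $\alpha$ is).

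I do not anticipate a real obstacle: the argument is a regrouping of a finite sum — essentially transposing a finite $0/1$ matrix — followed by the short dyadic-rigidity induction and then a direct substitution. The only points requiring attention are keeping the indices consistent when passing from the $\lambda_s$-description to the $\gamma^k$-description, and the strict inequality in the inductive step. As a consistency check, in Example~\ref{ex:binary} the digit columns of $(11,7)$ have sums $c_1=c_2=1$, $c_3=2$, and regrouping gives $(11,7)=\tfrac12(16,0)+\tfrac14(4,22)+\tfrac18(16,12)=\tfrac12(16,0)+\tfrac14(4,22)+\tfrac18(16,0)+\tfrac18(0,12)$, precisely the form on the right-hand side.
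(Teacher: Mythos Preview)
Your proof is correct and follows essentially the same route as the paper: both reduce the ``$\subseteq$'' direction to the dyadic-rigidity statement that positive integers $c_1,\dots,c_N$ with $\sum_{k}c_k 2^{-k}=1$ force $c_k=1$ for $k<N$ and $c_N=2$, the only difference being that you prove this by induction on $N$ while the paper uses a single inequality (pick one $1$ from each of the first $N$ columns and two from the last, observe this already sums to $1$). You also spell out the easy ``$\supseteq$'' direction, which the paper leaves implicit.
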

 \begin{proof}
  Let $\alpha \in \bconv\Delta_E$. Then there exist
  $\alpha^1,\ldots,\alpha^t\in \Delta_E\cap (2\Z_+)^n,
  \lambda_1,\ldots,\lambda_t>0, \sum_{s=1}^t\lambda_s=1$ such that
  \[
   \alpha = \lambda_1\alpha^1 + \cdots + \lambda_t\alpha^t
  \]
  and it has full digits.
  Suppose that $\lambda_s = \sum_{k=1}^{N+1}\delta_{sk}2^{-k}$ for
  $s=1,\ldots, t$.
  Since $\sum_{s=1}^t\lambda_s = 1$ and $\{\delta_{sN+1}\}_s$ corresponds
  to the $N+1$ st digits which are the last ones,
  the number of nonzero $\{\delta_{sN+1}\}_s$ is even.
  Thus there exist at least two nonzero $\delta_{s'N+1},\delta_{s''N+1}$.
  
  Since $\alpha$ has full digits, for each $k=1,\ldots, N$, there exists $\tau\in
  \{1,\ldots,t\}$
  such that $\delta_{\tau k}=1$ and then let $\tau(k)$ be the least such
  index.
  Then we have
\begin{multline*}
 1 = \sum_{s=1}^t\lambda_s \geq  \sum_{k=1}^{N}\delta_{\tau(k)k}2^{-k} + \delta_{s'N+1} 2^{-N-1}
 + \delta_{s''N+1} 2^{-N-1} \\
 = \sum_{k=1}^{N}2^{-k} + 2^{-N-1} + 2^{-N-1}=1.
\end{multline*}
Therefore there is only one $s$ with $\delta_{sk}=1$ for each
  $k=1,\ldots,N$.
  It gives the desired representation.
 \end{proof}
 
  \begin{prop}
   \label{prop:integrity_binary}
  For $\beta^k\in (2\Z_+)^n, k =1,\ldots N+1$, let  
  \[
   \alpha = \sum_{k=1}^N \frac{1}{2^k}\beta^k + \frac{1}{2^N}\beta^{N+1}.
  \]
  be contained in $\Z^n$.
  Then we have
  \[
   \sum_{k=N'}^{N}\frac{1}{2^{k-N'+2}}\beta^k + \frac{1}{2^{N-N'+2}}\beta^{N+1}.
  \]
  is contained in $\Z_+^n$ for $N'=2,\ldots, N+1$ with the convention
  $\sum_{k=N+1}^N a_k= 0$.
  \end{prop}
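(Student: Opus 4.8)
The plan is to reduce the claim to a coordinatewise $2$-adic divisibility statement and then exploit that the $\beta^k$ have \emph{even} entries. Fix a coordinate $i$ and put $b_k=\beta^k_i$ for $k=1,\dots,N+1$, so each $b_k$ is a nonnegative even integer. Set $S=\sum_{k=1}^{N}2^{N-k}b_k+b_{N+1}$ and, for $N'\in\{2,\dots,N+1\}$, set $S_{N'}=\sum_{k=N'}^{N}2^{N-k}b_k+b_{N+1}$ (so $S_{N+1}=b_{N+1}$). Multiplying the defining identity for $\alpha$ through by $2^{N}$ shows that the $i$-th coordinate of $\alpha$ is an integer if and only if $2^{N}\mid S$; multiplying the vector in the conclusion through by $2^{N-N'+2}$ shows that its $i$-th coordinate is a nonnegative integer if and only if $2^{N-N'+2}\mid S_{N'}$ (nonnegativity is automatic because all $b_k\ge 0$). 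Thus it suffices to derive $2^{N-N'+2}\mid S_{N'}$ from $2^{N}\mid S$, for each $N'$ and each coordinate.

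Next I would compare $S$ with $S_{N'}$. We have $S-S_{N'}=\sum_{k=1}^{N'-1}2^{N-k}b_k$, and this is exactly where evenness is used: since $b_k\in 2\Z$, the term $2^{N-k}b_k$ is divisible by $2^{N-k+1}$, and for $k\le N'-1$ one has $N-k+1\ge N-N'+2$; hence $2^{N-N'+2}\mid (S-S_{N'})$. Because $N'\ge 2$ we have $N-N'+2\le N$, so the hypothesis $2^{N}\mid S$ gives $2^{N-N'+2}\mid S$, and combining with the previous divisibility yields $2^{N-N'+2}\mid S_{N'}$, as required. The degenerate case $N'=N+1$ of this is just the statement $2\mid b_{N+1}$, i.e.\ the evenness of $\beta^{N+1}$, which is built into the hypothesis.

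I do not expect a real obstacle in this argument; the only thing to get right is the bookkeeping. The one conceptual point is that the hypothesis "$\beta^k\in(2\Z_+)^n$" rather than merely "$\beta^k\in\Z_+^n$" is exactly what supplies the extra factor of $2$ needed to absorb the low-order truncated terms, and that the restriction $N'\ge 2$ is precisely what keeps the target power $2^{N-N'+2}$ no larger than the available power $2^{N}$. The step most worth double-checking in a careful write-up is the exponent arithmetic after clearing denominators — in particular that multiplying the conclusion's vector by $2^{N-N'+2}$ really reproduces $\sum_{k=N'}^{N}2^{N-k}\beta^k+\beta^{N+1}$ — since an off-by-one there would break the comparison between $S$ and $S_{N'}$.
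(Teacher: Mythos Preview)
Your argument is correct and is essentially the same as the paper's, just unpacked coordinatewise in divisibility language. The paper compresses the whole thing into the single identity
\[
2^{N'-2}\Bigl(\alpha-\sum_{k=1}^{N'-1}\tfrac{1}{2^{k}}\beta^{k}\Bigr)=\sum_{k=N'}^{N}\tfrac{1}{2^{k-N'+2}}\beta^{k}+\tfrac{1}{2^{N-N'+2}}\beta^{N+1},
\]
and observes that the left side lies in $\Z^{n}$ because $\alpha\in\Z^{n}$ and each $2^{N'-2-k}\beta^{k}$ (for $k\le N'-1$) is integral thanks to $\beta^{k}\in(2\Z_{+})^{n}$; your computation that $2^{N-N'+2}\mid(S-S_{N'})$ is exactly this observation after clearing denominators.
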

  \begin{proof}
   Since $\beta^k\in (2\Z_+)^n$, the left hand side of
  \[
  2^{N'-2}\left(\alpha - \sum_{k=1}^{N'-1}\frac{1}{2^k}\beta^k\right)
  = \sum_{k=N'}^{N}\frac{1}{2^{k-N'+2}}\beta^k + \frac{1}{2^{N-N'+2}}\beta^{N+1}.
  \]
   is contained in $\Z_+^n$ and so is the right hand side.
  \end{proof}
\begin{example}
By Example $\ref{ex:binary}$,
\[
   (11,7) = \frac{1}{2}(16,0) + \frac{1}{2^2}(4,22)
 + \frac{1}{2^3}(16,0) + \frac{1}{2^3}(0,12) 
\]
and $(4,22)\in \Delta_E$.
In addition, we have all of right hand sides of
\begin{align*}
&   (11,7) - \frac{1}{2}(16,0)  = \frac{1}{2^2}(4,22)
 + \frac{1}{2^3}(16,0) + \frac{1}{2^3}(0,12),\\
&   2\left((11,7) - \frac{1}{2}(16,0) - \frac{1}{2^2}(4,22)\right) = 
 \frac{1}{2^2}(16,0) + \frac{1}{2^2}(0,12)\\
&   2^2\left((11,7) - \frac{1}{2}(16,0) - \frac{1}{2^2}(4,22) - \frac{1}{2^3}(16,0)\right) = 
 \frac{1}{2}(0,12)
\end{align*}
are contained in $\Z_+^2$.

\end{example}
  Now we present sufficient conditions.
 \begin{thm}
\label{thm:sufficiency}
  Let $f\in \R[x]$ with $f(0)=0$. Suppose that
\begin{enumerate}
 \item Every vertex of $\Gamma(f)$ is even.
 \item For each vertex $\alpha$ of $\Gamma(f)$, $f_\alpha>0$.
 \item $f_{\Gamma(f)}(x)\in \rint
       \left(\sum\R[x]_{\frac{1}{2}{\Gamma(f)}}^2\right)$.
\item If for each maximal face $\gamma$ of $\Gamma(f)$,
        \begin{multline*}
         \{\alpha \in \supp f\cap \conv\Delta(f_\gamma)\setminus \gamma\mid \alpha \text{ is odd or } f_\alpha<0\}
	 \subset \bconv \Delta_E(f_\gamma).
       \end{multline*}
\end{enumerate}
Then $f\in \sum\R[[x]]^2$.
 \end{thm}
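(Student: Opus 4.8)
The plan is to adapt the term-by-term construction in the proof of Theorem~\ref{thm:homogeneous} to the situation where $\Gamma:=\Gamma(f)$ has several maximal faces lying in distinct supporting hyperplanes. For each maximal face $\gamma$ let $A^\gamma\in\Z_+^n\setminus\{0\}$ be a normal to $\gamma$ and $v_\gamma=\min\{A^\gamma\cdot\alpha\mid\alpha\in\supp f\}$, so that $f_\gamma$ is the part of $f$ of lowest $A^\gamma$-degree. By hypothesis~(3) and Lemma~\ref{lemma:principal}(2) pick $\epsilon_\gamma>0$ with $f_\gamma-\epsilon_\gamma p_\gamma\in\sum\R[x]^2_{\frac{1}{2}\gamma}$; this sets aside a ``positive reserve'' $\epsilon_\gamma p_\gamma$ associated with $\gamma$ while the complementary part is an honest sum of squares. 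Every monomial of $f$ not lying on $\Gamma$ sits in $\conv\Delta(f_\gamma)=\gamma+\R_+^n$ for at least one maximal $\gamma$, and I fix such an assignment $\gamma(\alpha)$; if such an $\alpha$ is even with $f_\alpha>0$ then $f_\alpha x^\alpha$ is already a square, and otherwise hypothesis~(4) places $\alpha$ in $\bconv\Delta_E(f_{\gamma(\alpha)})$, so Proposition~\ref{prop:normal_binary} gives $\alpha=\sum_{k=1}^N2^{-k}\beta^k+2^{-N}\beta^{N+1}$ with $\beta^k\in\Delta_E(f_{\gamma(\alpha)})\cap(2\Z_+)^n$.

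The monomials above $\Gamma$ are then dealt with exactly as the odd-degree terms were in the proof of Theorem~\ref{thm:homogeneous}: using the identity
\[
 Cx^{\beta}+cx^{\alpha'}=C\Bigl(x^{\beta/2}+\tfrac{c}{2C}x^{\alpha'-\beta/2}\Bigr)^2-\tfrac{c^2}{4C}x^{2(\alpha'-\beta/2)}
\]
repeatedly, one pairs $f_\alpha x^\alpha$ successively with auxiliary monomials $C_1x^{\beta^1},\dots,C_Nx^{\beta^N}$; Proposition~\ref{prop:integrity_binary} guarantees that every exponent occurring along the way is an integer point in $\Z_+^n$, and after $N$ steps one is left with $N$ squares together with the ``borrowed'' monomials $-\sum_r C_rx^{\beta^r}$ and a single leftover $-c_Nx^{\beta^{N+1}}$, where $\beta^{N+1}$ is even and lies in $\Delta_E(f_{\gamma(\alpha)})$. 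Each of these negative monomials has the form $-c\,x^{\beta_0+\delta}$ with $\beta_0\in\gamma(\alpha)\cap(2\Z)^n$ and $\delta$ even, hence is absorbed into the reserve through
\[
 \epsilon_{\gamma(\alpha)}x^{\beta_0}-\textstyle\sum_j c_jx^{\beta_0+\delta_j}=\epsilon_{\gamma(\alpha)}\bigl(x^{\beta_0/2}\bigr)^2\Bigl(1-\textstyle\sum_j\epsilon_{\gamma(\alpha)}^{-1}c_jx^{\delta_j}\Bigr)\in\sum\R[[x]]^2,
\]
using $1+u\in\sum\R[[x]]^2$ for $u(0)=0$; since the constants $C_r$ are at our disposal (they may be taken large precisely where $\beta^r$ is strictly above $\gamma(\alpha)$, which shrinks $c_N$) we can keep the total mass absorbed at each $\beta_0$ below $\epsilon_{\gamma(\alpha)}$, so each factor above has positive constant term.

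The step I expect to be the main obstacle is the part of $f$ supported on $\Gamma$ itself. Writing that part as $\sum_\gamma f_\gamma$ overcounts the monomials lying on lower-dimensional faces shared by several maximal faces, and one cannot in general redistribute such a shared monomial among the incident faces while keeping each restricted face polynomial a sum of squares --- already in two variables, a shared even vertex can make the naive split impossible, and indeed such an $f$ need not be a polynomial sum of squares at all. This is exactly where the strength of hypothesis~(3), namely membership in the \emph{relative interior} rather than merely in $\sum\R[x]^2_{\frac{1}{2}\gamma}$, is needed: it supplies, for each maximal face, a full-dimensional family of admissible perturbations of the chosen sum-of-squares decomposition, and one must organize the bookkeeping so that the overcounting corrections on the shared faces, the reserves $\epsilon_\gamma p_\gamma$, and the corrections produced above $\Gamma$ are all mutually absorbed (this is also the step that genuinely produces power series rather than polynomials). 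Collecting the squares obtained together with the resulting reserve-factors then yields $f\in\sum\R[[x]]^2$.
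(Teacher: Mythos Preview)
Your proposal follows the same route as the paper. The iterated completion-of-squares you describe, together with the absorption of the borrowed even monomials via factors of the form $1+u\in\sum\R[[x]]^2$, is precisely the content of Lemmas~\ref{lemma:binary_sos} and~\ref{lemma:monomial_sos}; the latter states that $f_\gamma\pm ax^\alpha\in\sum\R[[x]]^2$ whenever $f_\gamma\in\rint\bigl(\sum\R[x]^2_{\frac12\gamma}\bigr)$ and $\alpha\in\bconv\Delta_E(f_\gamma)\setminus\gamma$, and its proof uses the principal polynomial $p_\gamma$ as reserve exactly as you propose. The paper then applies this lemma with $\tfrac{\epsilon}{s}f_\gamma$ in place of $f_\gamma$ (legitimate since the relative interior of a cone is again a cone) to obtain $\tfrac{\epsilon}{s}f_\gamma+f_\alpha x^\alpha\in\sum\R[[x]]^2$ for each bad monomial above $\gamma$; summing over the $s$ monomials assigned to $\gamma$ yields $\epsilon f_\gamma+\sum_\alpha f_\alpha x^\alpha\in\sum\R[[x]]^2$ in one stroke, which replaces your running bookkeeping of the constants~$C_r$.

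The obstacle you single out---the overcounting on lower-dimensional faces when reassembling $f_\Gamma$ from the maximal-face pieces---is exactly the residual step in the paper's argument, and the paper dispatches it in a single clause: since $\epsilon$ can be chosen arbitrarily small and $f_{\gamma'}\in\rint\bigl(\sum\R[x]^2_{\frac12\gamma'}\bigr)$ for \emph{every} face $\gamma'$ of $\Gamma$, the leftover $f_\Gamma-\epsilon\sum_\gamma f_\gamma$ still has all face restrictions in the relative interior, and one concludes. Your instinct that this sentence is carrying more weight than it displays, and that the genuine use of the interiority hypothesis (and of power series rather than polynomials) is concentrated here, is well founded; the paper does not spell out how the argument closes for a polynomial supported entirely on~$\Gamma$.
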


We note that by Theorem $\ref{thm:necessity}$, Condition $(3)$ of Theorem $\ref{thm:sufficiency}$
implies the corresponding interiority condition for each face of $\Gamma(f)$.
To show the theorem, we need the following lemmas.

   \begin{lemma}
    \label{lemma:binary_sos}
    Let
  \[
   \alpha = \sum_{k=1}^N \frac{1}{2^k}\beta^k + \frac{1}{2^N}\beta^{N+1},
  \]
  where $\beta^k\in (2\Z_+)^n, N\in \N$.
   For any $\epsilon>0, a\in \R,t\in \{1,\ldots,N+1\}$
   there exists $M>0$ such that
\[
\sum_{k=1}^{N+1}\epsilon x^{\beta^k} -  ax^\alpha + M x^{\beta^{t}} \in \sum\R[x]^2. 
\]
   \end{lemma}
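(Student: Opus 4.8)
The plan is to absorb the single negative monomial $-a\,x^{\alpha}$ by descending through the binary combination one dyadic level at a time: at each level the current ``debt'' monomial is turned into an honest square at the cost of a new, deeper debt monomial whose coefficient shrinks, and the last debt is paid off using the free large coefficient $M$.

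First I would set up the recursion. Put $\alpha^{(1)}=\alpha$, $\alpha^{(j+1)}=2\alpha^{(j)}-\beta^j$ for $j=1,\dots,N-1$, and $\alpha^{(N+1)}=\beta^{N+1}$. Unwinding, $\alpha=\sum_{k=1}^{j-1}2^{-k}\beta^k+2^{-(j-1)}\alpha^{(j)}$, hence $\alpha^{(j)}=2^{j-1}\alpha-\sum_{k=1}^{j-1}2^{j-1-k}\beta^k$ and $\alpha^{(j)}=\tfrac12(\beta^j+\alpha^{(j+1)})$ for every $j=1,\dots,N$. Using that $\alpha\in\Z^n$ (needed already for $x^{\alpha}\in\R[x]$) together with $\beta^k\in(2\Z_+)^n$, one sees as in Proposition \ref{prop:integrity_binary} that each $\alpha^{(j)}\in\Z_+^n$, and moreover each $\alpha^{(j)}$ with $2\le j\le N$ is even, as is $\alpha^{(N+1)}=\beta^{N+1}$. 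Consequently all the exponent vectors $\tfrac12\beta^j$, $\tfrac12\alpha^{(j+1)}$ and $\alpha^{(j)}$ appearing below lie in $\Z_+^n$, so every monomial written is a genuine element of $\R[x]$.

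The engine is the identity, valid for $c>0$ and even $\mu,\nu\in\Z_+^n$,
\[
c\,x^{\mu}-b\,x^{(\mu+\nu)/2}=c\Big(x^{\mu/2}-\tfrac{b}{2c}\,x^{\nu/2}\Big)^{\!2}-\tfrac{b^2}{4c}\,x^{\nu}.
\]
Applying it successively with $(\mu,\nu)=(\beta^j,\alpha^{(j+1)})$ and coefficients $c_j>0$ to be chosen, and setting $a_1=a$, $a_{j+1}=a_j^2/(4c_j)$, the debt telescopes:
\[
\sum_{j=1}^{N}c_j\,x^{\beta^j}-a\,x^{\alpha}=\sum_{j=1}^{N}c_j\Big(x^{\beta^j/2}-\tfrac{a_j}{2c_j}\,x^{\alpha^{(j+1)}/2}\Big)^{\!2}-a_{N+1}\,x^{\beta^{N+1}}.
\]
Now I would distribute the coefficients: for $t\le N$ take $c_t=\epsilon+M$ and $c_j=\epsilon$ otherwise, so that $\sum_{j\le N}c_j\,x^{\beta^j}=\sum_{j\le N}\epsilon\,x^{\beta^j}+M\,x^{\beta^t}$ and, bringing in the remaining term $\epsilon\,x^{\beta^{N+1}}$,
\[
\sum_{k=1}^{N+1}\epsilon\,x^{\beta^k}-a\,x^{\alpha}+M\,x^{\beta^t}=\Big(\text{a sum of squares}\Big)+(\epsilon-a_{N+1})\,x^{\beta^{N+1}}.
\]
Since $a_1,\dots,a_t$ do not depend on $M$ whereas $a_{t+1}=a_t^2/\big(4(\epsilon+M)\big)\to 0$, and hence $a_{N+1}\to 0$, as $M\to\infty$, for $M$ large the coefficient $\epsilon-a_{N+1}$ is nonnegative and the last term equals $\big(\sqrt{\epsilon-a_{N+1}}\,x^{\beta^{N+1}/2}\big)^{2}$. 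For $t=N+1$ I would instead take all $c_j=\epsilon$, which leaves $(\epsilon+M-a_{N+1})\,x^{\beta^{N+1}}$ with $a_{N+1}$ now a fixed number, again a square once $M$ is large. Either way the right-hand side lies in $\sum\R[x]^2$.

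The one genuine difficulty is precisely this last point: the naive telescoping with all $c_j=\epsilon$ leaves $-a_{N+1}\,x^{\beta^{N+1}}$ whose coefficient $a_{N+1}=a_N^2/(4\epsilon)$ may be far larger than $\epsilon$, so the single reserve term $\epsilon\,x^{\beta^{N+1}}$ cannot absorb it; the remedy is to feed the free large coefficient $M$ into the telescoping at the slot $t$ prescribed by the statement (or, when $t=N+1$, straight into the final term), which drives $a_{t+1},\dots,a_{N+1}$ to $0$. The remaining points — that all the half-integer exponents actually land in $\Z_+^n$, which rests on $\alpha\in\Z^n$ and $\beta^k\in(2\Z_+)^n$ as in Proposition \ref{prop:integrity_binary}, and that the displayed polynomial identities remain valid even when some of the $\beta^k$ coincide — are routine bookkeeping.
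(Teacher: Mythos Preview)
Your argument is correct and follows the same strategy as the paper: an iterated completion of squares along the dyadic expansion of $\alpha$, feeding the large coefficient in at level $t$ so that the residual debt at $\beta^{N+1}$ becomes small (or is absorbed directly when $t=N+1$). Your bookkeeping with the auxiliary exponents $\alpha^{(j)}$ and the recursions $a_{j+1}=a_j^2/(4c_j)$ is a tidier packaging of exactly the computations the paper writes out explicitly with its constants $C_j$ and $D_j$; in particular your appeal to Proposition~\ref{prop:integrity_binary} to ensure $\alpha^{(j)}\in(2\Z_+)^n$ for $j\ge 2$ matches the paper's use of the same proposition.
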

   \begin{proof}
   Case $t = N+1$.
\begin{align*}
&     \sum_{k=1}^{N+1} \epsilon x^{\beta^k} - ax^\alpha 
    \\
 & = \sum_{k=2}^{N+1}\epsilon x^{\beta^k}
 + \epsilon\left(x^{2^{-1}\beta^1}
 - \frac{a}{2\epsilon}x^{\sum_{k=2}^N 2^{-k}\beta^k +
 2^{-N}\beta^{N+1}}\right)^2  \\
& \hspace{5ex} - \epsilon\left(\frac{a}{2\epsilon}\right)^2 x^{\sum_{k=2}^N 2^{-k+1}\beta^k + 2^{-N+1}\beta^{N+1}}\\
 & = \sum_{k=3}^{N+1}\epsilon x^{\beta^k}
 + \epsilon\left(
 x^{2^{-1}\beta^1} - \frac{a}{2\epsilon}x^{\sum_{k=2}^N 2^{-k}\beta^k + 2^{-N}\beta^{N+1}}
 \right)^2 \\
& \hspace{5ex} + \epsilon \left(x^{2^{-1}\beta^2}
 - \frac{1}{2}\left(\frac{a}{2\epsilon}\right)^2 x^{\sum_{k=3}^N 2^{-k+1}\beta^k + 2^{-N+1}\beta^{N+1}}
 \right)^2 \\
 & \hspace{5ex} - \epsilon\left(\frac{1}{2}\left(\frac{a}{2\epsilon}\right)^2\right)^2
x^{\sum_{k=3}^N 2^{-k+2}\beta^k + 2^{-N+2}\beta^{N+1}}\\
 & \hspace{5ex} \vdots\\
& = \epsilon x^{\beta^{N+1}}
 + \sum_{j=1}^{N-1}\epsilon\left(
 x^{2^{-1}\beta^j} - C_jx^{\sum_{k=j+1}^N 2^{-k+j-1}\beta^k + 2^{-N+j-1}\beta^{N+1}}
 \right)^2\\
& \hspace{5ex} + \epsilon\left(x^{2^{-1}\beta^N} - C_N x^{2^{-1}\beta^{N+1}
 }\right)^2
 - \epsilon C_N^2 x^{\beta^{N+1}}
\end{align*}
where
\[
 C_1 = \frac{a}{2\epsilon},\ C_j = 2^{-1}C_{j-1}^2,\ j= 1,2,\ldots,N.
\]
Thus we have    
\[
 C_j=\frac{a^{2^{j-1}}}{2^{2^{j}-1}\epsilon^{2^{j-1}}},\ j= 1,2,\ldots,N.
\]
    By Proposition $\ref{prop:integrity_binary}$,
    we have $\sum_{k=j+1}^N 2^{-k+j-1}\beta^k + 2^{-N+j-1}\beta^{N+1}$ is
    contained in $\Z_+^n$ for each $j$.
    Therefore $\sum_{k=1}^{N+1}\epsilon x^{\beta^k} - a x^\alpha +
    \epsilon C_N^2x^{\beta^{N+1}}\in \sum\R[x]^2$.\\
Case $t = \{2,\dots,N\}$.
 \begin{align*}
  &     \sum_{k=1}^{N+1} \epsilon x^{\beta^k} - ax^\alpha \\
  & = \sum_{j=t}^{N+1}\epsilon x^{\beta^{j}}
 + \sum_{j=1}^{t-1}\epsilon\left(
 x^{2^{-1}\beta^j} + C_jx^{\sum_{k=j+1}^N 2^{-k+j-1}\beta^k + 2^{-N+j-1}\beta^{N+1}}
  \right)^2\\
& \hspace{5ex} - \epsilon C_{t-1}^2x^{\sum_{k=t}^N 2^{-k+t-1}\beta^{k} + 2^{-N+j-1}\beta^{N+1}}\\
 & = \sum_{j=t}^{N+1}\epsilon x^{\beta^{j}}
 + \sum_{j=1}^{t-1}\epsilon\left(
 x^{2^{-1}\beta^j} + C_jx^{\sum_{k=j+1}^N 2^{-k+j-1}\beta^k + 2^{-N+j-1}\beta^{N+1}}
  \right)^2   - L x^{\beta^t}\\
  & \hspace{5ex} + L x^{\beta^t}
- \epsilon C_{t-1}^2x^{\sum_{k=t}^N 2^{-k+t-1}\beta^{k} +
  2^{-N+t-1}\beta^{N+1}}\\
  & = \sum_{j=t+1}^{N+1}\epsilon x^{\beta^{j}}
 + \sum_{j=1}^{t-1}\epsilon\left(
 x^{2^{-1}\beta^j} + C_jx^{\sum_{k=j+1}^N 2^{-k+j-1}\beta^k+ 2^{-N+j-1}\beta^N}
  \right)^2   - (L + \epsilon) x^{\beta^t}\\
  & \hspace{5ex} + L \left(x^{2^{-1}\beta^t}
- \frac{C_{t}}{2L}x^{\sum_{k=t+1}^N 2^{-k+t-1}\beta^{k} + 2^{-N+t-1}\beta^{N+1}}
\right)^2 
- \frac{C_t^2}{2^2 L}x^{\sum_{k=t+1}^N 2^{-k+t}\beta^{k} +
  2^{-N+t}\beta^{N+1}}\\
  & = \sum_{j=t+2}^{N+1}\epsilon x^{\beta^{j}}
  - (L + \epsilon) x^{\beta^t}
 + \sum_{j=1}^{t-1}\epsilon\left(
 x^{2^{-1}\beta^j} + C_jx^{\sum_{k=j+1}^N 2^{-k+j-1}\beta^k + 2^{-N+j-1}\beta^N}
  \right)^2   \\
  & \hspace{5ex} + L \left(x^{2^{-1}\beta^t}
- \frac{C_{t}}{2L}x^{\sum_{k=t+1}^N 2^{-k+t-1}\beta^{k} + 2^{-N+t-1}\beta^{N+1}}
  \right)^2 \\
  &
  \hspace{5ex} + \epsilon \left(x^{2^{-1}\beta^{t+1}}
- \frac{1}{2\epsilon}\frac{C_{t}^2}{2^2L}x^{\sum_{k=t+2}^N 2^{-k+t}\beta^{k} + 2^{-N+t}\beta^{N+1}}
  \right)^2\\
& \hspace{5ex} - \frac{1}{2^2\epsilon}\left(\frac{C_t^2}{2^2 L}\right)^2 x^{\sum_{k=t+2}^N 2^{-k+t+1}\beta^{k} +
  2^{-N+t+1}\beta^{N+1}}\\
  & = \epsilon x^{\beta^{N+1}}
  - (L + \epsilon) x^{\beta^t}\\
& \hspace{5ex} + \sum_{j=1}^{t-1}\epsilon\left(
 x^{2^{-1}\beta^j} + C_jx^{\sum_{k=j+1}^N 2^{-k+j-1}\beta^k + 2^{-N+j-1}\beta^{N+1}}
  \right)^2   \\
  & \hspace{5ex} + L \left(x^{2^{-1}\beta^t}
- \frac{C_{t}}{2L}x^{\sum_{k=t+1}^N 2^{-k+t-1}\beta^{k} + 2^{-N+t-1}\beta^{N+1}}
  \right)^2 \\
  &
 \hspace{5ex} + \sum_{j=t+1}^N\epsilon \left(x^{2^{-1}\beta^{j}}
- D_{j} x^{\sum_{k=j+1}^N 2^{-k+j-1}\beta^{k} + 2^{-N+j-1}\beta^{N+1}}
  \right)^2 - \epsilon D_{N}^2 x^{\beta^{N+1}},
 \end{align*}
    where
\[
    D_{t+1} = \frac{C_t^2}{2^3\epsilon L},\quad D_j = 2^{-1}D_{j-1}^2,\
    j = t+2,\ldots, N,
\]
    and hence we have
\[
 D_j =
    \frac{1}{2^{2^{j-t-1}-1}}\left(\frac{a^{2^t}}{2^{2^{t+1}+1}\epsilon^{2^t+1}L}\right)^{2^{j-t-1}},\
    j = t+2,\ldots, N.
\]
Then by taking $L$ large so that $D_N<1$, we obtain
    $\sum_{k=1}^{N+1}\epsilon x^{\beta^k} - ax^\alpha + (L +
    \epsilon)x^{\beta^t}\in \sum\R[x]^2$.
     Case $t = 1$ is identical to the case $t = N+1$.
   \end{proof}
  

\begin{lemma}
\label{lemma:monomial_sos}
For $f\in \R[x]$ with $f(0) = 0$, let $\gamma$ be a face of $\Gamma(f)$. 
Suppose that $f_{\gamma} \in \rint\left(\sum\R[x]^2_{\frac{1}{2}{\gamma}} \right)$.
Then for any $a>0$, $\alpha \in \bconv\Delta_E(f_\gamma)\setminus \gamma$, 
\[
 f_\gamma \pm ax^\alpha \in \sum\R[[x]]^2.
\]
\end{lemma}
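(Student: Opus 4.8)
The plan is to peel off just enough of the principal polynomial $p_\gamma$ to pay for the monomial $x^\alpha$ and then invoke the telescoping identity of Lemma \ref{lemma:binary_sos}. First, using Lemma \ref{lemma:principal}(2) I fix $\epsilon>0$ with $f_\gamma - 2\epsilon p_\gamma \in \sum\R[x]^2$; the two copies of $\epsilon p_\gamma$ left over will serve two distinct roles. Since $\alpha \in \bconv\Delta_E(f_\gamma)$, Proposition \ref{prop:normal_binary} gives a representation $\alpha = \sum_{k=1}^N 2^{-k}\beta^k + 2^{-N}\beta^{N+1}$ with each $\beta^k \in \Delta_E(f_\gamma)\cap(2\Z_+)^n$; for each $k$ I choose $\delta^k \in \supp f_\gamma\cap(2\Z)^n$ with $\nu^k := \beta^k - \delta^k \in \Z_+^n$, and record that $\nu^k$ is even and that $x^{\delta^k}$ is a monomial of $p_\gamma$ (as $\delta^k\in\gamma\cap(2\Z)^n$). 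Two elementary facts drive everything: (i) since $\gamma$ is convex and $\alpha\notin\gamma$, the $\nu^k$ cannot all vanish, so I may fix $t$ with $\nu^t\neq 0$; (ii) for $\delta\in\Z_+^n$ even and $\nu\in\Z_+^n$, $x^\delta - x^{\delta+\nu} = (x^{\delta/2})^2(1 - x^\nu)\in\sum\R[[x]]^2$, using the standard fact that $1+u\in\sum\R[[x]]^2$ whenever $u(0)=0$ together with closure of $\sum\R[[x]]^2$ under multiplication.

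Then I would write $f_\gamma \pm ax^\alpha$ as a telescoping sum. Set $\epsilon_1 = \epsilon/(N+1)$. From the first copy of $\epsilon p_\gamma$: the difference $\epsilon p_\gamma - \sum_{k=1}^{N+1}\epsilon_1 x^{\delta^k}$ has nonnegative coefficients (each even lattice point of $\gamma$ has coefficient $1$ in $p_\gamma$ and occurs as some $\delta^k$ for at most $N+1$ values of $k$), so it is a sum of monomial squares; and $\sum_{k}\epsilon_1 x^{\delta^k} - \sum_k \epsilon_1 x^{\beta^k} = \sum_k \epsilon_1(x^{\delta^k} - x^{\beta^k})\in\sum\R[[x]]^2$ by (ii). Lemma \ref{lemma:binary_sos}, applied with $\epsilon_1$ in place of $\epsilon$, with $\mp a$ in place of $a$, and with the index $t$ from (i), furnishes $M>0$ such that $\sum_{k=1}^{N+1}\epsilon_1 x^{\beta^k} \pm a x^\alpha + M x^{\beta^t}\in\sum\R[x]^2$. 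The leftover $-Mx^{\beta^t}$ is absorbed by the second copy of $\epsilon p_\gamma$: it contains $\epsilon x^{\delta^t}$, and $\epsilon x^{\delta^t} - M x^{\beta^t} = \epsilon(x^{\delta^t/2})^2\bigl(1 - (M/\epsilon)x^{\nu^t}\bigr)\in\sum\R[[x]]^2$ exactly because $\nu^t\neq 0$, while $\epsilon p_\gamma - \epsilon x^{\delta^t}$ is again a sum of monomial squares. The six expressions $(f_\gamma - 2\epsilon p_\gamma) + (\epsilon p_\gamma - \sum_k\epsilon_1 x^{\delta^k}) + \sum_k\epsilon_1(x^{\delta^k}-x^{\beta^k}) + (\sum_k\epsilon_1 x^{\beta^k}\pm ax^\alpha + Mx^{\beta^t}) + (\epsilon p_\gamma - \epsilon x^{\delta^t}) + (\epsilon x^{\delta^t} - Mx^{\beta^t})$ sum to $f_\gamma\pm ax^\alpha$ and each lies in $\sum\R[[x]]^2$, so $f_\gamma\pm ax^\alpha\in\sum\R[[x]]^2$.

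I expect the crux to be fact (i) together with arranging the hypotheses of Lemma \ref{lemma:binary_sos}: the whole argument needs $\alpha$ to possess a genuine dyadic convex representation whose summands are even lattice points dominating support points of $f_\gamma$, so that the telescoping squares of Lemma \ref{lemma:binary_sos} have integer exponents (this is where Proposition \ref{prop:integrity_binary} enters) and so that the unavoidable leftover monomial $x^{\beta^t}$ sits strictly above $\gamma$ and can therefore be killed via $p_\gamma$ only through a convergent power series rather than an honest polynomial — which is precisely what forces the conclusion into $\sum\R[[x]]^2$ and precisely why the weaker hypothesis $\alpha\in\conv\Delta(f_\gamma)$ would not suffice. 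The remaining work — the multiplicity bookkeeping that dictates the factor $1/(N+1)$ and checking that the signs go through uniformly for the $\pm$ — is routine once this scheme is in place.
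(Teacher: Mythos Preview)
Your proof is correct and follows essentially the same route as the paper's: both extract $\epsilon p_\gamma$ via Lemma~\ref{lemma:principal}, represent $\alpha$ dyadically via Proposition~\ref{prop:normal_binary}, locate an index $t$ with $\beta^t\notin\gamma$, invoke Lemma~\ref{lemma:binary_sos} to absorb $\pm ax^\alpha$ at the cost of $Mx^{\beta^t}$, and then kill $Mx^{\beta^t}$ by the power-series trick $x^{\delta}(1-cx^{\omega})\in\sum\R[[x]]^2$. Your bookkeeping---reserving $2\epsilon p_\gamma$ and introducing an anchor $\delta^k\in\gamma$ for every $\beta^k$---is in fact tidier than the paper's single-$\epsilon$ version, which leaves the absorption of the off-face $\beta^k$ with $k\neq t$ somewhat implicit.
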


\begin{proof}
 Since
 $f_\gamma\in \rint\left(\sum\R[x]^2_{\frac{1}{2}\gamma} \right)$,
there exists $\epsilon>0$ such that $f_\gamma - \epsilon p_\gamma\in \sum\R[x]^2$.
 Let arbitrary $\alpha\in \bconv\Delta_E(f_\gamma)\setminus\gamma$ be fixed.
 Then there exist $\{\beta^k\}\in \Delta_E(f_\gamma)\cap (2\Z_+)^n$ such that
 $\alpha = \sum_{k=1}^N 2^{-k}\beta^k + 2^{-N}\beta^{N+1}$.
     Since $\gamma$ is a face, there exist
   $A=(A_1,\ldots,A_n)\in \Z_+^n\setminus\{0\}^n$ and $v>0$ such that
   $\{\alpha'\in \Z_+^n\mid A\cdot\alpha'=v\}$ contains $\gamma$.
  By taking the dot product of $A$ and $\alpha$, we have
   \[
   A\cdot\alpha = \sum_{k=1}^N \frac{1}{2^k}A\cdot\beta^k +
   \frac{1}{2^N}A\cdot\beta^{N+1}.
   \]
   Since $\alpha\notin \gamma$, we have $A\cdot\alpha>v$. In addition, since
 $\sum_{k=1}^N 2^{-k} + 2^{-N-1}=1$,
   there exists $t\in \{1,\ldots,N+1\}$ such that
   $A\cdot\beta^t> v$ and thus
   $\beta^t\notin \gamma$.
%
 
Now, for $M>0$ we have
\begin{equation*}
\begin{split}
 & f_\gamma \pm a x^\alpha \\
 & = f_\gamma -\epsilon p_\gamma + \epsilon
 p_\gamma - \frac{\epsilon}{N+2} \sum_{k=1}^{N+1}x^{\beta^k}
 + \frac{\epsilon}{N+2} \sum_{k=1}^{N+1}x^{\beta^k} \pm a x^\alpha - Mx^{\beta^t} + Mx^{\beta^t}\\
 & = \left(f_\gamma - \epsilon p_\gamma\right)
 + \left(\epsilon p_\gamma - \frac{\epsilon}{N+2} \sum_{k=1}^{N+1}x^{\beta^k} - Mx^{\beta^t}\right)\\
& + \left(\frac{\epsilon}{N+2} \sum_{k=1}^{N+1}x^{\beta^k} \pm a x^\alpha + Mx^{\beta^t}\right).
\end{split}
\label{eq1}
\end{equation*}
 By Lemma $\ref{lemma:binary_sos}$, there exists $M>0$ such that the last parenthesis is contained in $\sum\R[x]^2$.
 Since $\beta^t\in \Delta_E(f_\gamma)\setminus\gamma\cap\Z^n$,
 there exist $\tilde{\beta^t}\in \gamma\cap(2\Z_+)^n$ and $\omega\in
 (2\Z_+)^n\setminus\{0\}^n$ such that
 $\beta^t = \tilde{\beta^t} + \omega$.
 In addition, let $r=\#\{\beta^k\mid
 \beta^k=\beta^t,k=1,\ldots,N+1\}$ and $\tr=\#\{\beta^k\mid \beta^k=\tilde{\beta^t},k=1,\ldots,N+1\}$.
 Then $0\leq r,\tr\leq N+1$ and
\begin{align*}
 &  \epsilon p_\gamma - \frac{\epsilon}{N+2} \sum_{k=1}^{N+1}x^{\beta^k} - Mx^{\beta^t}\\
& =\epsilon \sum_{\alpha'\in \gamma\cap
 (2\Z_+)^n\atop{\alpha'\neq \tilde{\beta^t}}}x^{\alpha'}
 + \epsilon x^{\tilde{\beta^t}}
 -\frac{\tr\epsilon}{N+2}x^{\tilde{\beta^k}}
 - \frac{\epsilon}{N+2}\sum_{k\neq
 t\atop{\beta^k\neq\beta^t,\tilde{\beta^k}}}x^{\beta^k} \\
 & \hspace{5ex} - (r\epsilon(N+2)^{-1} + M)x^{\beta^t}\\
 & =\epsilon \sum_{\alpha'\in \gamma\cap
 (2\Z_+)^n\atop{\alpha'\neq \tilde{\beta^t}}}x^{\alpha'}
 - \frac{\epsilon}{N+2}\sum_{k\neq t\atop{\beta^k\neq\beta^t,\tilde{\beta^k}}}x^{\beta^k}\\
& \hspace{5ex} + \epsilon x^{\tilde{\beta^t}}\left(1-\tr(N+2)^{-1}- \epsilon^{-1}(r\epsilon(N+2)^{-1} +
 M)x^{\omega}\right)
\end{align*}
 is contained in $\sum\R[[x]]^2$.
Therefore, we have $f_\gamma \pm a x^\alpha \in \sum\R[[x]]^2$.

\end{proof}

\begin{example}
 Let $f(x,y) = x^{16} + y^{10} - x^{13} y^2$ and $\Gamma = \Gamma(f)$.
 Then $\Gamma = \{\lambda(16,0) + (1-\lambda)(0,10)\mid 0\leq\lambda\leq1\}$ and 
 $\Delta_E(f_\Gamma) = \left\{(16,0) + (2\Z_+)^2\right\}
 \cup \left\{(0,10) + (2\Z_+)^2\right\}$. 
 We have $(13,2)\in \bconv \Delta_E$. In fact, 
\[
   (13,2) = \frac{1}{2}(16,0) + \frac{1}{2^2}(16,0)
 + \frac{1}{2^3}(0,10) + \frac{1}{2^4}(16,0) + \frac{1}{2^4}(0,12)
\]
 and $(0,12) = (0,10) + (0,2) \in \Delta_E(f_\Gamma)\setminus\Gamma$.
 Now we have
 \begin{align*}
  \frac{1}{2^2}(16,0)
 + \frac{1}{2^3}(0,10) + \frac{1}{2^4}(16,0) + \frac{1}{2^4}(0,12) & =
  (5,2)\\
  \frac{1}{2^2}(0,10) + \frac{1}{2^3}(16,0) + \frac{1}{2^3}(0,12) & =
  (2,4)\\
   \frac{1}{2^2}(16,0) + \frac{1}{2^2}(0,12) & = (4,3)\\
 \frac{1}{2}(0,12) & = (0,6).
 \end{align*}
 Thus we obtain that for any $\epsilon_0>0$ there exists $M>0$
\begin{multline*}
   \epsilon_0\left(3x^{16} + y^{10} + y^{12}\right) - x^{13} y^2
  + My^{12}\\
  = \epsilon_0(x^8 - (2^{-1}\epsilon_0^{-1})x^5y^2)^2
 + \epsilon_0(x^8 - (2^{-3}\epsilon_0^{-2})x^2y^4)^2
  + \epsilon_0(y^5 - (2^{-7}\epsilon_0^{-4})x^4y^3)^2\\
 + \epsilon_0(x^8 - (2^{-15}\epsilon_0^{-8})y^6)^2
 + (\epsilon_0 + M - 2^{-30}\epsilon_0^{-15})y^{12}.
\end{multline*}
 is contained in $\sum\R[x]^2$.
 Therefore
 \begin{align*}
  f(x,y) & = x^{16} + y^{10} - x^{13} y^2 - \epsilon(x^{16} + y^{16})
  + \epsilon(x^{16} + y^{10})\\
& \hspace{3ex} - 6^{-1}\epsilon\left(3x^{16} +  y^{10} + y^{12}\right) +
  6^{-1}\epsilon\left(3x^{16} +  y^{10} + y^{12}\right)
  - My^{12} + My^{12}
  \\
  & = (1-\epsilon)x^{16} + (1-\epsilon)y^{10} 
  + \epsilon(1-2^{-1})x^{16}\\
& \hspace{3ex} + \epsilon y^{10}\left(1-6^{-1} - \epsilon^{-1}(6^{-1}\epsilon + M)y^{2}\right) \\
& 
\hspace{3ex} +  6^{-1}\epsilon\left(3x^{16} +  y^{10} + y^{12}\right)
  - x^{13} y^2 + My^{12}  
 \end{align*}
 is contained in $\sum\R[[x]]^2$.
\end{example}
\begin{proof}
[Proof of Theorem $\ref{thm:sufficiency}$]
 For a maximal face $\gamma$ of $\Gamma:=\Gamma(f)$,
let $s$ be the number of elements of $\supp f\cap\left(\conv\Delta(f_\gamma)\setminus
 \gamma\right)$.
	For arbitrary small $\epsilon>0$ and each $\alpha \in \supp
 f\cap\left(\conv\Delta(f_\gamma)\setminus 
 \gamma\right)$, Lemma \ref{lemma:monomial_sos} ensures that
\[
  \frac{\epsilon}{s}f_\gamma + f_\alpha x^\alpha\in \sum\R[[x]]^2.
\]
Therefore 	
	\[
	 \epsilon f_\gamma + \sum_{\alpha}\{f_\alpha x^\alpha\mid 
	\alpha \in \supp f\cap\left(\Delta(f_\gamma)\setminus \gamma\right)\}
	\]
 is contained in $\sum\R[[x]]^2$.
 Let us consider the right hand side of
\[
 f = f_\Gamma - \epsilon \sum_{\gamma}f_\gamma 
+ 
\epsilon \sum_{\gamma}f_\gamma + 
\sum_{\alpha}\{f_\alpha x^\alpha\mid 
	\alpha \in \supp f\setminus \Gamma\},
\]
where the first and second summations are taken 
 with respect to every maximal face $\gamma$ of $\Gamma$.
 Since $\epsilon>0$ is an arbitrary small constant
 and $f_{\Gamma}\in \rint\left(\sum\R[x]^2_{\frac{1}{2}\Gamma}\right)$,
 we have $f_\Gamma - \epsilon \sum_{\gamma}f_\gamma \in
 \sum\R[x]^2_{\frac{1}{2}\Gamma}$ and hence $f \in \sum\R[[x]]^2$. 
\end{proof}

\subsection{Regularity of Newton polyhedra}
In Theorem $\ref{thm:sufficiency}$, Condition $(4)$ is hard to check.
However there are some kinds of Newton diagrams which the condition is automatically
satisfied.
In addition, it will be shown that
when we use Theorem $\ref{thm:sufficiency}$,
we need to check the condition for only lower degree parts of polynomials.
First we define a regularity property of Newton polyhedra.

 \begin{definition}
  \label{def:regularity}
Let $f\in \R[x]$ with $f(0)=0$.
 We say that $f$ has a \textit{regular Newton polyhedron}, if $f$
 satisfies that  
\begin{enumerate}
 \item Every vertex of $\Gamma(f)$ is even;
 \item For each vertex $\alpha$ of $\Gamma(f)$, $f_\alpha>0$;
\item If for each maximal face $\gamma$ of $\Gamma(f)$,
        \begin{multline*}
         \{\alpha \in \supp f\cap \conv\Delta(f_\gamma)\setminus\gamma\mid \alpha \text{ is odd or } f_\alpha<0\}
	 \subset \bconv \Delta_E(f_\gamma).
       \end{multline*}
\end{enumerate}

 \end{definition}
 With this regularity, Theorem $\ref{thm:sufficiency}$
 can be restated as
follows:
 \begin{thm}
  \label{thm:sufficiency_reg}
  Let $f\in \R[x]$ with $f(0) = 0$.
  Suppose that $f$ has a regular Newton polyhedron.
  If we have 
  $f_{\Gamma(f)}(x)\in \rint\left(\sum\R[x]_{\frac{1}{2}{\Gamma(f)}}^2\right)$,
  then $f\in \sum\R[[x]]^2$.
 \end{thm}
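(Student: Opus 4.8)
The plan is to derive Theorem~\ref{thm:sufficiency_reg} directly from Theorem~\ref{thm:sufficiency} by observing that the hypotheses of the former are precisely a repackaging of the hypotheses of the latter. First I would note that if $f$ has a regular Newton polyhedron, then Conditions $(1)$, $(2)$ of Definition~\ref{def:regularity} are literally Conditions $(1)$, $(2)$ of Theorem~\ref{thm:sufficiency}, and Condition $(3)$ of Definition~\ref{def:regularity} is literally Condition $(4)$ of Theorem~\ref{thm:sufficiency}. So the only thing left to supply is Condition $(3)$ of Theorem~\ref{thm:sufficiency}, namely that $f_\gamma\in\rint\bigl(\sum\R[x]_{\frac12\gamma}^2\bigr)$ for every maximal face $\gamma$ of $\Gamma(f)$ --- but this is exactly the extra hypothesis imposed in Theorem~\ref{thm:sufficiency_reg}.

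Concretely, the proof is: assume $f(0)=0$, that $f$ has a regular Newton polyhedron, and that $f_\gamma\in\rint\bigl(\sum\R[x]_{\frac12\gamma}^2\bigr)$ for each maximal face $\gamma$ of $\Gamma(f)$. Then all four hypotheses of Theorem~\ref{thm:sufficiency} are met: $(1)$ and $(2)$ come from Definition~\ref{def:regularity}$(1)$, $(2)$; $(3)$ is the standing assumption; and $(4)$ is Definition~\ref{def:regularity}$(3)$. Applying Theorem~\ref{thm:sufficiency} yields $f\in\sum\R[[x]]^2$, which is the conclusion. This is essentially a one-line deduction once the bookkeeping of which numbered condition matches which is made explicit.

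There is, strictly speaking, no mathematical obstacle here --- the statement is a cosmetic reformulation. The only point requiring a word of care is that Theorem~\ref{thm:sufficiency} as stated concludes ``$f\in\sum\R[[x]]$'' (apparently a typo for $\sum\R[[x]]^2$), whereas Theorem~\ref{thm:sufficiency_reg} states $f\in\sum\R[[x]]^2$; I would treat these as the same assertion, the squaring being the intended meaning throughout the paper. One might also remark, as the text already does just before Theorem~\ref{thm:sufficiency}, that by Theorem~\ref{thm:necessity} the interiority condition on maximal faces propagates to all faces of $\Gamma(f)$, so there is no hidden gap between ``maximal face'' and ``face'' when invoking Lemma~\ref{lemma:monomial_sos} inside the proof of Theorem~\ref{thm:sufficiency}.

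Thus the proof I would write is simply: ``The conditions that $f$ has a regular Newton polyhedron are Conditions $(1)$, $(2)$ and $(4)$ of Theorem~\ref{thm:sufficiency}, while the hypothesis $f_\gamma\in\rint\bigl(\sum\R[x]_{\frac12\gamma}^2\bigr)$ for each maximal face $\gamma$ of $\Gamma(f)$ is Condition $(3)$. Hence Theorem~\ref{thm:sufficiency} applies and gives $f\in\sum\R[[x]]^2$.'' No induction, limiting argument, or new estimate is needed; the content lies entirely in Theorem~\ref{thm:sufficiency} and in the supporting Lemmas~\ref{lemma:binary_sos}, \ref{lemma:monomial_sos}, which have already been proved.
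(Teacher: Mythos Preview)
Your proposal is correct and matches the paper's treatment exactly: the paper introduces Theorem~\ref{thm:sufficiency_reg} with the sentence ``With this regularity, Theorem~\ref{thm:sufficiency} can be restated as follows,'' giving no separate proof, so the intended argument is precisely the bookkeeping observation you spell out. Your identification of Definition~\ref{def:regularity}(1)--(3) with conditions (1), (2), (4) of Theorem~\ref{thm:sufficiency} and of the extra hypothesis with condition (3) is the whole content.
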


%
%
%
%
%


The following proposition explains a different aspect of Lemma $\ref{lemma:pd}$
that if a Newton diagram is included in the plane $|\alpha|=2$ and meets
all coordinate axes, its Newton polyhedron is regular.

 \begin{prop}
 \label{prop:quad}
   Let $f\in \R[x]$. Suppose that
\[
 \Gamma:=\Gamma(f) = \left\{\alpha\in \Z_+^n\mid \alpha_1+
 \cdots + \alpha_{n-1} + \alpha_n = 2 \right\}.
\]
  If $f_\Gamma$ is positive definite,
  then $\conv\Delta(f_\Gamma)\cap Z^n\subset \bconv\Delta_E(f_\Gamma)$
  and thus $f$ has a regular Newton polyhedron.
 \end{prop}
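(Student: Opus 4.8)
The plan is to make the three relevant sets explicit and then exhibit one uniform binary convex combination. Since $f_\Gamma$ is positive definite, the coefficient of $x_i^2$ in $f_\Gamma$ is strictly positive, hence $2e_i\in\supp f_\Gamma$ for every $i$; as $2e_1,\dots,2e_n$ are the only even exponents of total degree $2$, this gives $\Delta_E(f_\Gamma)=\bigcup_{i=1}^n(2e_i+\R_+^n)$, and therefore
\[
\Delta_E(f_\Gamma)\cap(2\Z_+)^n=(2\Z_+)^n\setminus\{0\}^n,\qquad
\conv\Delta(f_\Gamma)\cap\Z^n=\{\alpha\in\Z_+^n\mid|\alpha|\ge2\}.
\]
(The second equality uses $\conv\Delta(f_\Gamma)=\conv\{2e_1,\dots,2e_n\}+\R_+^n$.)

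The core step is to show that every $\alpha\in\Z_+^n$ with $|\alpha|\ge2$ lies in $\bconv\Delta_E(f_\Gamma)$. Choose an index $i$ with $\alpha_i\ge1$ and set $\beta^1=2e_i$ and $\beta^2=2(\alpha-e_i)$. Both are even, and both are nonzero — $\beta^2\ne0$ because $|\alpha|\ge2$ forces $\alpha\ne e_i$ — so by the previous step $\beta^1,\beta^2\in\Delta_E(f_\Gamma)\cap(2\Z_+)^n$. Since $\tfrac12\beta^1+\tfrac12\beta^2=e_i+(\alpha-e_i)=\alpha\in\Z^n$, the equality $\alpha=\tfrac12\beta^1+\tfrac12\beta^2$ is a binary convex combination landing in $\Z^n$, and it has full digits: with $N=1$ both coefficients equal $2^{-1}$, so the single digit is nonzero. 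Hence $\alpha\in\bconv\Delta_E(f_\Gamma)$, which proves the asserted inclusion $\conv\Delta(f_\Gamma)\cap\Z^n\subset\bconv\Delta_E(f_\Gamma)$.

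Finally I would read off regularity. The vertices of $\Gamma(f)$ are $2e_1,\dots,2e_n$, so Condition (1) of Definition \ref{def:regularity} holds, and $f_{2e_i}>0$ by positive definiteness gives Condition (2). The only maximal face of $\Gamma(f)$ is $\Gamma$ itself, with $f_\Gamma$ the degree-$2$ part; thus the set appearing in Condition (3) is contained in $\supp f\cap\conv\Delta(f_\Gamma)\subset\conv\Delta(f_\Gamma)\cap\Z^n\subset\bconv\Delta_E(f_\Gamma)$ by the inclusion just established, so Condition (3) holds as well and $f$ has a regular Newton polyhedron. I do not foresee a genuine obstacle; the only point needing care is checking that $\beta^1$ and $\beta^2$ actually lie in $\Delta_E(f_\Gamma)$ and not merely in $(2\Z_+)^n$, which is exactly where positive definiteness of $f_\Gamma$ (forcing all the $2e_i$ into the support) enters.
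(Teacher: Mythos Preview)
Your argument is correct and is genuinely simpler than the route taken in the paper. The paper proceeds by induction on the number of variables: for $\alpha\in\conv\Delta(f_\Gamma)\cap\Z^{n+1}$ it splits into the cases $\alpha_{n+1}\ge 2$ (where $\alpha\in 2e_{n+1}+\Z_+^{n+1}\subset\Delta_E(f_\Gamma)$ directly), $\alpha_{n+1}=1$ (where it writes $\alpha=\tfrac12\bigl(2e_{n+1}+(2\beta,0)\bigr)$, essentially your trick applied to the last coordinate), and $\alpha_{n+1}=0$ (where it invokes the induction hypothesis on $f(x_1,\dots,x_n,0)$). You observed that the bisection $\alpha=\tfrac12(2e_i)+\tfrac12(2\alpha-2e_i)$ works uniformly for \emph{every} $\alpha$ with $|\alpha|\ge 2$, once one has identified $\Delta_E(f_\Gamma)\cap(2\Z_+)^n=(2\Z_+)^n\setminus\{0\}$; this eliminates both the induction and the case distinction. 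The paper's approach has the mild advantage of making the reduction-to-fewer-variables structure explicit, which is reused in the proof of Theorem~\ref{thm:almost_quad}, but your direct argument is cleaner for the proposition itself and matches the normal form of Proposition~\ref{prop:normal_binary} with $N=1$.
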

  \begin{proof}
Let $f = \sum_k f_k$ be the expansion of
its homogeneous components where $\deg f_k = k$.   
   We note that the assumption is equivalent to
that $f_0=f_1 = 0$ and $f_2$ is positive definite.
   We show the conclusion by induction on the number of variables.
   
  If $n = 1$, we can write $f = f_2x^2 + \sum_{k=3}^d f_k x^k$ where 
  $d = \deg f$.
  Then $f_2>0$ and $\supp f\subset \{2\} + \Z_+$.
  Thus $\conv\Delta(f_\Gamma)\cap \Z\subset \Delta_E(f_\Gamma)$.
  
  Suppose that the conclusion holds for $n$.
  Let $f\in \R[x_1,\ldots,x_{n+1}]$ be such that 
\[
 \Gamma = \Gamma(f) = \{\alpha \in \Z_+^{n+1} \mid \alpha_1 + \cdots +
  \alpha_{n+1} = 2\}
\]
   and $f_\Gamma$ is positive definite. 
  Then for the canonical basis $\{e_i\}$ of $\Z^{n+1}$, we have $2e_i\in \Gamma
  \cap \supp f_2$ for $i = 1, \dots, n+1$.
  Clearly, $f$ satisfies the condition $(1)$ and $(2)$ of Definition \ref{def:regularity}.
  Suppose $\alpha \in \conv\Delta(f_\Gamma)\cap \Z^{n+1}$.
  
  Case $\alpha_{n+1} \geq 2$. Then
  \[
  \alpha\in \{2e_{n+1}\} + \Z_+^{n+1}
  \subset \supp f_\Gamma\cap(2\Z)^{n+1} + \Z_+^{n+1}
  \subset \Delta_E(f_\Gamma)\cap \Z_+^{n+1}.
  \]
  Since $\Delta_E(f_\Gamma)\cap \Z_+^{n+1} \subset \bconv
  \Delta_E(f_\Gamma)$, we have $\alpha \in \bconv \Delta_E(f_\Gamma)$.
  
  Case $\alpha_{n+1} = 1$. Then $\alpha = e_{n+1} + (\beta,0)$ for some
  $\beta\in \Z_+^{n}$. Now we have
  \[
   \alpha = \frac{1}{2}\{2e_{n+1} + (2\beta,0)\}.
  \]
  Since at least one component of $2\beta$ is greater than or equal to $2$,
  the same arguments in the previous case implies that $(2\beta,0)\in \Delta_E(f_\Gamma)$. In addition $2e_{n+1}\in
  \Delta_E(f_\Gamma)$ and thus $\alpha \in \bconv\Delta_E(f_\Gamma)$.

  Case $\alpha_{n+1} = 0$.
  Then $\alpha = (\talpha,0)$ for some $\talpha\in Z_+^n$.
  Define $\tf = f(x_1,\ldots,x_n,0)$.
  Then $\tf\in \R[x_1,\ldots,x_n]$, $\tf_0 = \tf_1 = 0$ and $\tf_2$ is
  positive definite.
  Since $\{\alpha\in \supp f_2
  \mid \alpha_{n+1} = 0\} = \supp\tf_2 \times \{0\}$,
  we have
\[
    \alpha \in (\conv\Delta(\tf_{\tGamma})\cap \Z^n)\times \{0\}  \subset \bconv\Delta_E(\tf_{\tGamma})\times \{0\}, 
\]
  where the inclusion is implied by the induction hypothesis.
  Now we claim that $\Delta_E(\tf_{\tGamma})\times \{0\} \subset
  \Delta_E(f_\Gamma)$.
  Let $\alpha'\in \Delta_E(\tf_{\tGamma})\times \{0\}$. Then $\alpha' =
  (\beta + r, 0)$ for some $\beta\in \supp \tf_{\tGamma}\cap(2\Z)^n$, $r\in
  \R_+^{n}$. Since $(\beta,0)\in \supp
  f_\Gamma\cap (2\Z)^{n+1}$,
  we have
   $(\beta + r,0) = (\beta,0) + (r,0)\in \Delta_E(f_\Gamma)$.
  Thus
  \[
  \alpha \in \bconv\Delta_E(\tf_{\tGamma})\times \{0\} =
  \bconv(\Delta_E(\tf_{\tGamma})\times \{0\}) \subset \bconv\Delta_E(f_\Gamma).
  \]

  Therefore $\conv\Delta(f_\Gamma)\cap\subset\bconv\Delta_E(f_\Gamma)$.
  Since $\Gamma$ is the unique maximal face of $f$, $f$ has a regular
  Newton polyhedron.
  \end{proof}

 In the case that a Newton diagram is contained in a plane,
 we can slightly relax a condition of Theorem $\ref{thm:homogeneous}$
 which means that it has to be parallel
 to the plane $|\alpha|=2$.
\begin{thm}
\label{thm:almost_quad}
 Let $f\in \R[x]$. Suppose that
\[
 \Gamma:=\Gamma(f) = \left\{\alpha\in \Z_+^n\mid k\alpha_1+
 \cdots + k\alpha_{n-1} + \alpha_n = 2k \right\}
\]
for some $k\in \Z_+$.
 If $f_\Gamma\in \rint(\sum\R[x]_{\frac{1}{2}\Gamma}^2)$,  then
 $f$ has a regular Newton polyhedron.
\end{thm}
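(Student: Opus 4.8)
The plan is to verify the three conditions of Definition \ref{def:regularity} directly. Write $\Gamma:=\Gamma(f)$ and let $\{e_i\}$ be the canonical basis of $\Z^n$. By hypothesis $\Gamma$ is the $(n-1)$-dimensional simplex cut out of $\R_+^n$ by $k\alpha_1+\cdots+k\alpha_{n-1}+\alpha_n=2k$; it is the unique maximal compact face of $\conv\Delta(f)$, with vertices $2e_1,\dots,2e_{n-1},2ke_n$, all of which are even, so condition (1) holds. These vertices lie in $\supp f$ (vertices of a Newton polyhedron lie in the support), hence in $\supp f_\Gamma$, so $\New(f_\Gamma)=\conv\Gamma$ and $\Gamma(f_\Gamma)=\Gamma$. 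Since $f_\Gamma\in\rint\left(\sum\R[x]^2_{\frac{1}{2}\Gamma}\right)\subset\sum\R[x]^2$ is a nonnegative polynomial, the fact that nonnegativity forces even vertices with strictly positive face coefficients (used in the proof of Theorem \ref{thm:necessity}, via \cite[Proposition 1.2]{V}), applied to $f_\Gamma$, gives $f_\alpha>0$ at every vertex $\alpha$ of $\Gamma$, which is condition (2).

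The substance is condition (3). Since $\Gamma$ is the only maximal face of $\Gamma(f)$, there $f_\gamma=f_\Gamma$; and because the vertices of $\Gamma$ lie in $\supp f_\Gamma$, one checks $\conv\Delta(f_\Gamma)=\conv\Delta(f)=\{\alpha\in\R_+^n\mid k\alpha_1+\cdots+k\alpha_{n-1}+\alpha_n\geq 2k\}$, so the set appearing in condition (3) is contained in $S:=\{\alpha\in\Z_+^n\mid k\alpha_1+\cdots+k\alpha_{n-1}+\alpha_n\geq 2k+1\}$. Also $\Delta_E(f_\Gamma)\supset\Delta_0:=\bigcup_{i<n}(2e_i+\R_+^n)\cup(2ke_n+\R_+^n)$, and $\bconv$ is clearly monotone, so it suffices to prove $S\subset\bconv\Delta_0$. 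For this I would exhibit each $\alpha\in S$ as a two-term combination $\alpha=\frac{1}{2}\beta^1+\frac{1}{2}\beta^2$ with $\beta^1,\beta^2\in\Delta_0\cap(2\Z_+)^n$; such a combination is a binary convex combination with full digits (it is the normal form of Proposition \ref{prop:normal_binary} with $N=1$), hence lies in $\bconv\Delta_0$.

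The choice of $\beta^1,\beta^2$ splits into three cases. If $\alpha$ is even, then $\alpha\in\Delta_0\cap(2\Z_+)^n$ already: either $\alpha_i\neq 0$ for some $i<n$, so $\alpha_i\geq 2$ and $\alpha\geq 2e_i$, or $\alpha_1=\cdots=\alpha_{n-1}=0$, so $\alpha_n\geq 2k+2$ and $\alpha\geq 2ke_n$; take $\beta^1=\beta^2=\alpha$. If $\alpha$ is odd with $\alpha_i\geq 1$ for some $i<n$, take $\beta^1=2e_i$ and $\beta^2=2\alpha-2e_i$; this $\beta^2$ is even and nonnegative, and it lies in $\Delta_0$ because either $2\alpha_n\geq 2k$, or else $\alpha_n<k$, in which case the inequality $k\alpha_1+\cdots+k\alpha_{n-1}+\alpha_n\geq 2k+1$ forces $\sum_{j<n}\alpha_j\geq 2$ and hence some coordinate of $2\alpha-2e_i$ indexed below $n$ is $\geq 2$. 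If instead $\alpha=\alpha_ne_n$ is odd, then $\alpha_n\geq 2k+1$, and we take $\beta^1=2ke_n$, $\beta^2=(2\alpha_n-2k)e_n$, noting $2\alpha_n-2k\geq 2k+2$. In every case both $\beta^1,\beta^2$ are even points of $\Delta_0$, as needed.

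These three steps establish conditions (1)--(3), so $f$ has a regular Newton polyhedron. The only genuinely delicate point is the middle case of the combinatorial argument: the strict inequality defining $S$ is exactly what keeps the remainder $2\alpha-2e_i$ out of the set $\{2\ell e_n\mid 0\leq\ell<k\}$ of even lattice points that fail to lie in $\Delta_0$; the rest is routine bookkeeping about Newton polyhedra. I would also note that, combined with Theorem \ref{thm:sufficiency_reg}, this yields $f\in\sum\R[[x]]^2$ under the same interiority hypothesis, although that is not asked in the present statement.
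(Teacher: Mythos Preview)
Your argument is correct. The verification of conditions (1) and (2) is routine, and for condition (3) your direct construction works: in each of the three cases the points $\beta^1,\beta^2$ are even, nonnegative, and land in $\Delta_0$, so by Proposition~\ref{prop:normal_binary} with $N=1$ the midpoint $\alpha$ lies in $\bconv\Delta_0\subset\bconv\Delta_E(f_\Gamma)$. The delicate step in case~2 is indeed showing $\beta^2=2\alpha-2e_i\in\Delta_0$, and your dichotomy $\alpha_n\geq k$ versus $\alpha_n<k$ (the latter forcing $\sum_{j<n}\alpha_j\geq 2$ and hence some even coordinate of $\beta^2$ below $n$ to be at least $2$) handles this cleanly.

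The paper organises the proof differently. It splits on $|\tilde\alpha|:=\sum_{j<n}\alpha_j$ rather than on parity. For $|\tilde\alpha|=1$ and $|\tilde\alpha|=0$ the paper gives the same two-term midpoint construction as you do. For $|\tilde\alpha|\geq 2$, however, the paper does \emph{not} build a two-term combination directly; instead it restricts to the face $\gamma=\Gamma\cap\{\alpha_n=0\}$, uses Lemma~\ref{lemma:principal} and Theorem~\ref{thm:necessity} to deduce that the restricted face polynomial $f_\gamma$ is a positive definite quadratic form in $x_1,\dots,x_{n-1}$, and then invokes Proposition~\ref{prop:quad} (the purely quadratic case, proved by induction on $n$) to get $\tilde\alpha\in\bconv\Delta_E(\tilde f_{\tilde\gamma})$, finally embedding back into $\bconv\Delta_E(f_\Gamma)$. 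Your approach bypasses Proposition~\ref{prop:quad} entirely and never uses the interiority hypothesis for condition~(3); it shows in fact the stronger combinatorial statement that every lattice point strictly above $\Gamma$ is a midpoint of two even points of $\Delta_0$. The paper's route is more modular (it reuses the quadratic case already established), while yours is shorter and reveals that only length-one binary expansions are needed for this particular diagram.
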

 \begin{proof}
%
Suppose $\alpha = (\talpha,\alpha_n)\in (\conv\Delta(f_\Gamma)\cap
  \Z_+^n)\setminus\Gamma$.
Then $k|\talpha| + \alpha_n > 2k$.

 Case $|\talpha| \geq 2$. 
  Let $\gamma=\Gamma \cap \{\alpha\in \Z_+^n\mid \alpha_n = 0\}$
  Then 
  $\gamma = \{(\alpha',0) \in \Z_+^n\mid \alpha'_1 + \cdots + \alpha'_{n-1}
  = 2\}$ and $\gamma$ is a face of $\Gamma$.
  In addition $\gamma=\Gamma(\tf)\times \{0\}$ where
  $\tf(x_1,\ldots,x_{n-1}) = f(x_1,\dots,x_{n-1},0)$.
  Let $\tgamma=\Gamma(\tf)$.
  Then $f_\gamma = \tf_{\tgamma}$ and $\talpha\in \conv\Delta(\tf_{\tgamma})\cap \Z^{n-1}$.
  By Lemma $\ref{lemma:principal}$, 
$f_\Gamma - \epsilon p_\Gamma$ belongs to $\sum\R[x]^2$ 
for a sufficiently small $\epsilon>0$.
Applying Theorem \ref{thm:necessity} to the face $\gamma$ of
$\Gamma$, we also have $f_\gamma - \epsilon p_\gamma\in \sum\R[x]^2$.
Then
  we have $\tf_{\tgamma} = f_\gamma=(f_\gamma - \epsilon p_\gamma) + \epsilon p_\gamma$ is
  a positive definite quadratic form in $x_1,\ldots, x_{n-1}$.
  Thus Proposition $\ref{prop:quad}$ implies that
\begin{multline*}
   \alpha = (\talpha,0) + (0,\ldots,0,\alpha_{n}) \in
  \conv\Delta(\tf_{\tgamma})\cap \Z^{n-1}\times\{0\} + \Z_+^n\\
  \subset \conv\Delta(\tf_{\tgamma})\cap \Z^{n-1} \times \Z_+
 \subset \bconv\Delta_E(\tf_{\tgamma})\times \Z_+.
\end{multline*}
Since $\supp \tf_{\tgamma} \cap (2\Z)^{n-1}\times \{0\}\subset \supp f_\gamma \cap
  (2\Z)^n$, we have
  $\Delta_E(\tf_{\tgamma})\times \Z_+ \subset \Delta_E(f_\gamma)$.
  Thus
  \[
   \alpha\in \bconv\Delta_E(\tf_{\tgamma})\times \Z_+\subset
  \bconv\Delta_E(f_{\gamma}) \subset \bconv\Delta_E(f_\Gamma).
  \]
  

Case $|\talpha|= 1$.
Notice that $\alpha_n \geq k$ and there exists an unique index $t$ such
 that
$\alpha_t=1$ and $\alpha_s=0$ for $s\neq t$. 
Suppose that $t = 1$. Then we have
\begin{multline*}
 \alpha = (1, 0, \cdots, 0, \alpha_n) \\
 = \frac{1}{2}\left\{(2,0, \cdots, 0) +
 (0, \cdots, 0, 2\alpha_n)\right\}\in \bconv \Delta_E(f_\Gamma).
\end{multline*}
The same argument gives the inclusion for the case $t=2,\dots,n$.
  
The case $|\talpha| = 0$ is obvious.
 \end{proof}


\begin{example}
Let $f(x,y,z) = x^2 + y^2 + xyz + yz^6 + z^{10}$.
Then $\Gamma(f) = \{\alpha\in \R_+^3\mid 5\alpha_1 + 5\alpha_2 + \alpha =
 10\}$.
Here the lowest form $g(x,y,z) = x^2 + y^2$ is only a positive
 semidefinite form and thus Lemma $\ref{lemma:pd}$ can not be applied.
 However $f\in\sum\R[[x]]^2$ by
 Theorem $\ref{thm:almost_quad}$ and Theorem $\ref{thm:sufficiency_reg}$.
In fact, we can see it directly by
\[
 f = y^2\left(1 - \frac{3}{4}z^2\right) + \left(x +
 \frac{1}{2}yz\right)^2
 + \frac{1}{2}z^{10} + \frac{1}{2}\left(yz + z^5\right)^2.	     
\]
\end{example}

The following proposition ensures that
the regularity of lower degree parts is enough for a polynomials
to belong $\sum\R[[x]]^2$.
\begin{thm}
\label{thm:bounded_reg}
 Suppose that $f\in \R[x]$ satisfies the following;
\begin{enumerate}
\item $\Gamma:=\Gamma(f)$ meets all coordinate axes;
\item $f_\Gamma(x)\in
 \rint\left(\sum\R[x]_{\frac{1}{2}\Gamma}^2\right)$.
\end{enumerate}
If $\sum\{f_\alpha x^\alpha:|\alpha|\leq \deg(f_\Gamma)+1\}$
has a regular Newton polyhedron,
then we have $f\in \sum\R[[x]]^2$.
\end{thm}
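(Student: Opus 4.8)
The plan is to reduce to Theorem~\ref{thm:sufficiency_reg} by upgrading the regularity from the truncation to $f$ itself. Put $d=\deg f_\Gamma$ and $\tf=\sum\{f_\alpha x^\alpha : |\alpha|\leq d+1\}$, so $f=\tf+h$ with $h=\sum\{f_\alpha x^\alpha : |\alpha|\geq d+2\}$. First I would note that passing from $\tf$ to $f$ leaves the Newton diagram untouched: $\tf$ contains every monomial of $f$ of degree $\leq d$, hence all vertices of $\conv\Delta(f)$, so $\conv\Delta(\tf)=\conv\Delta(f)$ and $\Gamma(\tf)=\Gamma(f)=:\Gamma$; and since every $\alpha\in\gamma$ has $|\alpha|\leq d$, we get $\tf_\gamma=f_\gamma$ for every face $\gamma$ of $\Gamma$, so that $\conv\Delta(f_\gamma)$, $\Delta_E(f_\gamma)$ and $\bconv\Delta_E(f_\gamma)$ do not depend on whether we compute them from $f$ or from $\tf$. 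In particular the hypothesis that $\tf$ has a regular Newton polyhedron together with $f_\gamma=\tf_\gamma\in\rint(\sum\R[x]^2_{\frac12\gamma})$ already gives $\tf\in\sum\R[[x]]^2$ by Theorem~\ref{thm:sufficiency_reg}; the point is to absorb the higher-order tail $h$.

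I would do this by checking that $f$ satisfies Definition~\ref{def:regularity}. Conditions~(1) and~(2) only see the vertices of $\Gamma$ and their coefficients, all of degree $\leq d$, so they transfer verbatim from $\tf$. For condition~(3), fix a maximal face $\gamma$ and a monomial $\alpha\in\supp f\cap\conv\Delta(f_\gamma)\setminus\gamma$ that is odd or has $f_\alpha<0$. If $|\alpha|\leq d+1$ then $\alpha\in\supp\tf$, and condition~(3) for $\tf$, transported by the identifications above, gives $\alpha\in\bconv\Delta_E(f_\gamma)$. The only case left is $|\alpha|\geq d+2$, and this is the crux.

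Here I would prove the following: for every maximal face $\gamma$ of $\Gamma$, each lattice point $\alpha\in\conv\Delta(f_\gamma)$ with $|\alpha|\geq\deg f_\gamma+2$ lies in $\bconv\Delta_E(f_\gamma)$ (and $\deg f_\gamma\leq d$, so this covers the missing monomials). The vertices $\beta^{(1)},\ldots,\beta^{(p)}$ of $\gamma$ are even, and $\deg f_\gamma=\max_i|\beta^{(i)}|$ because $|\cdot|$ is linear on the polytope $\gamma$. Writing $\alpha=\sum_i t_i\beta^{(i)}+r$ with $t_i\geq0$, $\sum_i t_i=1$, $r\in\R_+^n$, I would construct the normal form $\alpha=\sum_{k=1}^N 2^{-k}\beta^k+2^{-N}\beta^{N+1}$ of Proposition~\ref{prop:normal_binary} in which every $\beta^k$ is an even lattice point dominating some vertex $\beta^{(i)}$ of $\gamma$; the degree surplus $|\alpha|-\deg f_\gamma\geq2$, together with the hypothesis that $\Gamma$ meets all coordinate axes, is precisely what leaves room to round each $\beta^k$ up while keeping, by Proposition~\ref{prop:integrity_binary}, all partial sums integral, so that the process terminates with a genuine $\beta^{N+1}\in\Delta_E(f_\gamma)\cap(2\Z_+)^n$. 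I expect this combinatorial claim---equivalently, that truncating at degree $d+1$ does not destroy condition~(3)---to be the main obstacle; a clean proof should proceed by induction on the number of variables, stratifying $\gamma$ and $\conv\Delta(f_\gamma)$ by one coordinate exactly as in the proof of Proposition~\ref{prop:quad} and Theorem~\ref{thm:almost_quad}.

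Granting the claim, $f$ satisfies all three conditions of Definition~\ref{def:regularity}, i.e.\ $f$ has a regular Newton polyhedron; since in addition $f_\gamma\in\rint(\sum\R[x]^2_{\frac12\gamma})$ for every maximal face $\gamma$ by hypothesis~(2), Theorem~\ref{thm:sufficiency_reg} applied to $f$ gives $f\in\sum\R[[x]]^2$.
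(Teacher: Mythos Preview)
Your strategy is genuinely different from the paper's, and the combinatorial claim you isolate is exactly the place where your argument is incomplete. You want to show that every lattice point $\alpha\in\conv\Delta(f_\gamma)$ with $|\alpha|\geq\deg f_\gamma+2$ lies in $\bconv\Delta_E(f_\gamma)$, for an arbitrary maximal face $\gamma$. You acknowledge this as ``the main obstacle'' and sketch an induction modelled on Proposition~\ref{prop:quad} and Theorem~\ref{thm:almost_quad}, but those results treat very special faces (a standard simplex in the plane $|\alpha|=2$, or one axis stretched), where the even lattice points of $\Delta_E$ have an explicit recursive structure. For a general face $\gamma$ the vertex set and the associated $\Delta_E(f_\gamma)$ are essentially arbitrary, and nothing in the paper gives a handle on $\bconv\Delta_E$ in that generality. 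Your appeal to ``$\Gamma$ meets all coordinate axes'' also concerns $\Gamma(f)$, not the individual face $\gamma$, so it does not obviously feed into the normal-form construction of Proposition~\ref{prop:normal_binary} the way you suggest. As written, the proposal reduces the theorem to an unproved lemma of at least comparable difficulty.

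The paper avoids this entirely: it never tries to show that $f$ has a regular Newton polyhedron. Instead it writes
\[
 f=\bigl(f_0-\epsilon p_\Gamma\bigr)+\Bigl(\epsilon p_\Gamma-M\sum_i x_i^{d+2}\Bigr)+\Bigl(M\sum_i x_i^{d+2}+\sum_{|\alpha|\geq d+2}f_\alpha x^\alpha\Bigr),
\]
with $d=\deg f_\Gamma$ and $f_0$ your $\tf$. The first bracket lies in $\sum\R[[x]]^2$ by the regularity hypothesis on $f_0$ and Theorem~\ref{thm:sufficiency}; the second lies in $\sum\R[[x]]^2$ because $\Gamma(f)$ meets every axis, so each $x_i^{d+2}$ can be absorbed by a term $x_i^{2m_i}$ of $p_\Gamma$ via the $1+u$ trick; and the third bracket has lowest homogeneous part $M\sum_i x_i^{d+2}+\sum_{|\alpha|=d+2}f_\alpha x^\alpha$, which is pushed into $\rint\bigl(\sum\R[x]^2_{d/2+1}\bigr)$ by Lemma~\ref{lemma:GM} for large $M$, so Theorem~\ref{thm:homogeneous} applies. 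Thus the tail of degree $\geq d+2$ is handled by the already-proved homogeneous case, not by any statement about $\bconv\Delta_E$. If you want to salvage your route, you would need a self-contained proof of the $\bconv$ claim; otherwise the paper's decomposition is both shorter and complete.
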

\begin{proof}
Let $d=\deg(f_\Gamma)$, $f_0 = \sum\{f_\alpha x^\alpha:|\alpha|\leq d
 + 1\}$. 
 Then there exists $\epsilon>$ such that
$f_0 - \epsilon p_\Gamma\in
 \rint\left(\sum\R[x]_{\frac{1}{2}\Gamma}^2\right)$.

Since $d$ is even, Lemma $\ref{lemma:GM}$ ensures 
that for any $K>0$
there exists $M>0$ such that
\[
 M\sum_ix^{d+2}_i + \sum_{|\alpha| = d+2} f_\alpha x^\alpha\in
 \rint\sum\R[x]_{\frac{d}{2}+1}^2.
\]
\begin{multline*}
 f = \left(f_0  - \epsilon p_\Gamma\right)
 + \left(\epsilon p_\Gamma - M\sum_ix^{d+2}_i \right)\\
 + \left(M\sum_ix^{d+2}_i + \sum_{|\alpha|= d + 2}f_\alpha x^\alpha
 + \sum_{|\alpha|> d + 2}f_\alpha x^\alpha\right)
\end{multline*}
 Since $\Gamma(f)$ meets all coordinate axes, the second parenthesis is
 contained in $\sum\R[[x]]^2$.
 By Theorem $\ref{thm:homogeneous}$, the last parenthesis is contained
 in $\sum\R[x]^2$.
\end{proof}

As an easy consequence of Theorem $\ref{thm:bounded_reg}$,
if the Newton diagram stays away from other exponents,
regularity is not necessary to ensure $f\in \sum\R[[x]]^2$.

\begin{cor}
 Suppose that $f\in \R[x]$ satisfies
\begin{enumerate}
\item $\Gamma(f)$ meets all coordinate axes;
\item $f_\Gamma(x)\in
 \rint\left(\sum\R[x]_{\frac{1}{2}\Gamma}^2\right)$.
\end{enumerate}
If 
the degree of each monomial in $f - f_\Gamma$ is greater than $\deg(f_\Gamma) + 1$, then we have $f\in \sum\R[[x]]^2$.
\end{cor}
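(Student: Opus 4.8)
The plan is to reduce the statement to Theorem~\ref{thm:bounded_reg}. Write $\Gamma=\Gamma(f)$ and $d=\deg(f_\Gamma)$. The first step is to identify the truncation $f_0:=\sum\{f_\alpha x^\alpha:|\alpha|\le d+1\}$ that appears in Theorem~\ref{thm:bounded_reg} with $f_\Gamma$ itself: the supports of $f_\Gamma$ and of $f-f_\Gamma$ are disjoint, every monomial of $f_\Gamma$ has degree $\le d$, and by hypothesis every monomial of $f-f_\Gamma$ has degree $\ge d+2$; hence no monomial of $f$ has degree exactly $d+1$, the truncation at $|\alpha|\le d+1$ recovers precisely $\supp f_\Gamma$, and $f_0=f_\Gamma$. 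Since the vertices of the polyhedron $\conv\Delta(f)$ lie in $\supp f$, hence in $\Gamma\cap\supp f=\supp f_\Gamma$, one also gets $\conv\Delta(f_0)=\conv\Delta(f_\Gamma)=\conv\Delta(f)$, so $\Gamma(f_0)=\Gamma$; in particular $f_0$ has the same maximal faces as $f$, with $(f_0)_\gamma=f_\gamma$ for each of them.

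The main step is to check that $f_0=f_\Gamma$ has a regular Newton polyhedron in the sense of Definition~\ref{def:regularity}. Conditions (1) and (2) there follow from hypothesis (2): by the remark following Theorem~\ref{thm:sufficiency}, the interiority condition on the maximal faces descends to every face of $\Gamma$, and applied to a vertex $\alpha$ it reads $f_\alpha x^\alpha\in\rint\left(\sum\R[x]^2_{\frac{1}{2}\{\alpha\}}\right)$, which forces $\alpha$ to be even and $f_\alpha>0$ (otherwise that set is $\{0\}$ or the closed ray $\{tx^\alpha:t\ge0\}$, whose relative interior $f_\alpha x^\alpha$ cannot meet with the wrong sign). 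For Condition (3) I would show that for every maximal face $\gamma$ of $\Gamma$ the set $\supp f_\Gamma\cap\conv\Delta(f_\gamma)\setminus\gamma$ is empty, so the required inclusion in $\bconv\Delta_E(f_\gamma)$ holds vacuously. Here one uses $\conv\Delta(f_\gamma)=\conv(\supp f_\gamma)+\R_+^n=\gamma+\R_+^n$, the last equality because the vertices of the polytope $\gamma$ are vertices of $\conv\Delta(f)$ and hence lie in $\supp f$; thus a point $\alpha$ in that set would be $\alpha=\beta+r$ with $\beta\in\gamma\subset\conv\Delta(f)$ and $r\in\R_+^n\setminus\{0\}$. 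But $\alpha\in\Gamma$ lies on a compact face of $\conv\Delta(f)$, i.e.\ $A\cdot\alpha=\min\{A\cdot x:x\in\conv\Delta(f)\}$ for some strictly positive normal $A$, whereas $A\cdot\alpha=A\cdot\beta+A\cdot r>A\cdot\beta\ge\min\{A\cdot x:x\in\conv\Delta(f)\}$ --- a contradiction.

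With $f_0=f_\Gamma$ now known to have a regular Newton polyhedron, and since hypotheses (1) and (2) of the corollary are exactly hypotheses (1) and (2) of Theorem~\ref{thm:bounded_reg}, that theorem yields $f\in\sum\R[[x]]^2$, which is the desired conclusion. The only point that is not pure bookkeeping is the emptiness of $\supp f_\Gamma\cap\conv\Delta(f_\gamma)\setminus\gamma$: the argument is short, but I expect the care it requires --- recalling that a maximal face of a Newton diagram is a polytope whose vertices lie in the support, and that a compact face of a Newton polyhedron is supported by a hyperplane with strictly positive normal --- to be the one spot where the standard combinatorics of Newton polyhedra must be invoked explicitly.
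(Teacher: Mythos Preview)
Your proposal is correct and follows exactly the route the paper intends: the paper gives no detailed proof of this corollary, merely stating that it is ``an easy consequence of Theorem~\ref{thm:bounded_reg}'', and you have filled in precisely those details --- identifying the truncation $f_0$ with $f_\Gamma$, verifying that $f_\Gamma$ has a regular Newton polyhedron (with condition~(3) holding vacuously), and invoking Theorem~\ref{thm:bounded_reg}. The emptiness argument for $\supp f_\Gamma\cap\conv\Delta(f_\gamma)\setminus\gamma$ via a strictly positive supporting normal is the right piece of Newton-polyhedron combinatorics.
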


\section{Constrained case}

In this section, we seek a sufficient condition for $f\in \R[x]$ to belong to 
a quadratic module generated by several polynomials.
Here we consider a \textit{local order} on monomials in $\R[x]$.
For example, the \textit{anti-graded rex order} on $\R[x,y]$ is a local
order
satisfying that
\[
 1 > x > y > x^2 > xy > y^2.
\]
For the detailed definition and discussion, see \cite[Section 4.3]{CLO}.
For a given ordering, the \textit{leading term} $\LT(f)$ of $f$ 
be the maximal monomial appearing in $f$.
The following theorem is well-known \cite[Cor. 3.13 in Chap.4]{CLO}.



\begin{thm}
[Mora's division]
  For $f, g_i\in \R[x], i = 1, \ldots, l$ and a local order $>$,
 there exist $u, q_i, r\in \R[x]$ such that
 \begin{enumerate}
  \item $(1+u)f = \sum_i q_ig_i + r$,
 \item $u(0) = 0$,
 \item $\LT(f)\geq \LT(q_i g_i)$ for all $i$,
 \item $\LT(r)$ can not be divided by $\LT(g_i)$ for all $i$.
 \end{enumerate}
\end{thm}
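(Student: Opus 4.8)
The plan is to produce $u,q_i,r$ constructively by the \emph{Mora normal form algorithm} (Mora's division) and to read off (1)--(4) from the data maintained during the run. I will work with a local order compatible with total degree (the anti-graded order is of this kind; the general case needs only a cosmetic modification of the \'ecart below). For nonzero $p\in\R[x]$ write $E(p)=\deg p-\deg\LT(p)\ge0$ for its \emph{\'ecart}.

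The algorithm maintains a working polynomial $h$ (starting from $f$), a finite list $T$ of divisors (starting from $g_1,\dots,g_l$), and an identity $c\,f=\sum_i q_i g_i+h$ (starting from $c=1$, $q_i=0$). One step goes as follows. If $h=0$, or no $\LT(g)$ with $g\in T$ divides $\LT(h)$, stop and output $r:=h$, $u:=c-1$, and the current $q_i$. Otherwise choose $g\in T$ with $\LT(g)\mid\LT(h)$ of minimal \'ecart; if $E(g)>E(h)$, first append $h$ to $T$; then put $m:=\LT(h)/\LT(g)$ (a monomial, with coefficient chosen so that leading terms cancel) and replace $h$ by $h-m\,g$. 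If $g=g_i$ this only updates $q_i\mapsto q_i+m$; if $g=h'$ was appended to $T$ earlier, then $h'$ carries its own identity $c'f=\sum_i q_i'g_i+h'$, which I substitute, so $c\mapsto c-mc'$ and $q_i\mapsto q_i-mq_i'$. In all cases the identity $c\,f=\sum_i q_i g_i+h$ persists.

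Granting termination, (1) is just this identity at the end. For (2), note that whenever we reduce by an appended $h'$ we have $\LT(h')\mid\LT(h)$ with $\LT(h')\ne\LT(h)$, because the leading monomial of the working polynomial strictly decreases in the order at every reduction; hence $m=\LT(h)/\LT(h')$ is a non-constant monomial, $m(0)=0$, so $c(0)$ is never changed and stays equal to $1$, i.e.\ $u(0)=0$. For (3), I maintain the invariant $\LT(cf)=\LT(f)\ge\LT(q_ig_i)$ and $\LT(f)\ge\LT(h)$: a freshly contributed quotient term is either $m$, with $\LT(mg_i)=\LT(h)\le\LT(f)$, or $mq_i'$, with $\LT(mq_i'g_i)=m\,\LT(q_i'g_i)\le m\,\LT(f)<\LT(f)$ since $m<1$ for a non-constant monomial in a local order; at the end this gives $\LT(f)\ge\LT(q_ig_i)$. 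For (4), at termination $\LT(r)=\LT(h)$ is divisible by no $\LT(g)$ with $g\in T$, in particular by no $\LT(g_i)$.

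It remains to prove termination, which I expect to be the main obstacle. First I would show all \'ecarts occurring are bounded by $E:=\max\{E(f),E(g_1),\dots,E(g_l)\}$, by induction on the step number using $\deg(h-mg)\le\deg\LT(h)+E(g)$ together with degree-compatibility of the order. Next, $T$ is enlarged only finitely often: if $h^{(a)}$ is appended before $h^{(b)}$ and $\LT(h^{(a)})\mid\LT(h^{(b)})$, then at the step where $h^{(b)}$ is appended the minimal-\'ecart rule forces $E(h^{(a)})>E(h^{(b)})$; since $\N^n\times\{0,1,\dots,E\}$ is a well-partial-order under (divisibility of monomials, $\le$ of \'ecarts), an infinite sequence of appends would contain a pair $a<b$ with $\LT(h^{(a)})\mid\LT(h^{(b)})$ and $E(h^{(a)})\le E(h^{(b)})$, a contradiction. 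Once $T$ is stable, no further append occurs, so the chosen $g$ always satisfies $E(g)\le E(h)$; then $\deg h$ is non-increasing, hence eventually $\le$ some $D$, forcing $\deg\LT(h)\le D$, while $\LT(h)$ strictly decreases in the order --- impossible, as only finitely many monomials have degree $\le D$. Hence the algorithm halts. The well-partial-order step controlling the growth of $T$ is the delicate point; everything else is routine bookkeeping.
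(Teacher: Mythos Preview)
The paper does not prove this theorem at all: it simply records it as ``well-known'' and cites \cite[Cor.~3.13 in Chap.~4]{CLO}. So there is nothing to compare your argument against in the paper itself; you have supplied what the paper outsources.

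Your proof is the standard one --- Mora's tangent-cone normal form algorithm with the \'ecart selection rule --- and is essentially the argument that appears in the cited reference (and in Greuel--Pfister). The bookkeeping for (1)--(4) is correct: the identity $cf=\sum_i q_ig_i+h$ is preserved by construction; for (2) your observation that any appended $h'$ has $\LT(h')$ strictly larger (in the local order) than the current $\LT(h)$, so that the quotient monomial $m$ is non-constant, is exactly the right point; for (3) the inductive invariant $\LT(q_ig_i)\le\LT(f)$ together with $c=1+u$, $u(0)=0$ giving $\LT(cf)=\LT(f)$ is sound. The termination sketch is also the standard one: the inductive bound $E(h_k)\le E:=\max\{E(f),E(g_1),\dots,E(g_l)\}$ holds because at each step the chosen divisor lies in $T_k$, all of whose members have \'ecart $\le E$ by induction; Dickson's lemma on $\N^n\times\{0,\dots,E\}$ then forces $T$ to stabilise; after that $\deg h$ is non-increasing and $\LT(h)$ is strictly decreasing among finitely many monomials of bounded degree. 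One small remark: your parenthetical that the general local order ``needs only a cosmetic modification of the \'ecart'' is correct but understated --- for a local order given by a weight vector one replaces total degree by the weighted degree in the definition of $E$, and the same proof goes through verbatim.
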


Here we consider slightly modified version of the division.
\begin{definition}
[Modified Mora's division]
 After applying the Mora's division
\[
 (1+u)f = \sum_i q_ig_i + r,
\]
let $r_0$ be the polynomial obtained by eliminating 
all terms of $r$ included in 
the ideal generated by the leading monomials of linear parts of $g_i, h_j$.
For $\Gamma:= \Gamma(r_0)$, let $d = \deg(r_{0,\Gamma})$.
\begin{enumerate}
\item Divide further as
{\small \[
 (1+u')f = \sum_i q'_ig_i + r',
 \] }
 where any monomials of $r'$ with the degree $\leq d + 1$ can not be divided by
      $\LT(g_i)$ for all $i$.
\item Let $\tr$ be a polynomial obtained by eliminating all monomials of
      $r'$ with degree $> d +1$.
\end{enumerate}

We call $\br$ the \textit{essential remainder}.
\end{definition}

For $f\in \R[x]$, we use the notation $f_z(x) := f(x+z) - f(z)$.
Note that $f_z(0) = 0$.
For $g_i\in \R[x], i = 1, \ldots, l$, let $\langle g_1, \ldots,
g_l\rangle^{\sim} = \{\sum_i \tau_ig_i \mid \tau_i \in \R[[x]]\}$.

\begin{thm}
\label{thm:constrained}
For a global minimizer $z$ of $\mathrm{(POP)}$, let $L = f - \sum_{i=1}^l \lambda_ig_{i} - \sum_{j=1}^m
	\mu_jh_{j}$ with $\lambda_i\geq0 , \mu_j\in \R$ satisfying
 $\nabla L(z) = 0$ and $\lambda_i g_i(z) = 0$. 
Suppose that  for a local order, 
an essential remainder $\tr$ of modified Mora's division 
of
\[
L_z \text{ by } \{ \lambda_i g_{i,z},h_{j,z}\ \big|\ \lambda_i\nabla
 g_i(z)\neq 0\}.
\]
satisfies the following:
\begin{enumerate}
 \item $\Gamma = \Gamma(\tr)$ meets all coordinate axes of appearing
       variables in $\tr$.
 \item $\tr_\Gamma\in \rint\sum\R[x]_{\frac{1}{2}\Gamma}^2$
 \item $\tr$ has a regular Newton polyhedron.
\end{enumerate}
Then we have $f \in \tM(g_{1,z},\ldots,g_{l,z}) + \langle h_{1,z},\ldots,h_{m,z}\rangle^\sim$.
\end{thm}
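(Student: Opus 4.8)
The plan is to reduce the constrained problem to the unconstrained results of Section 4 by unwinding the modified Mora division and the second-order data. First I would observe that since $z$ is a global minimizer and $\nabla L(z)=0$, $\lambda_i g_i(z)=0$, the shifted polynomial $L_z$ satisfies $L_z(0)=0$ and $\nabla L_z(0)=0$; moreover $L_z \geq f_z - \sum_i \lambda_i g_{i,z} - \sum_j \mu_j h_{j,z}$ up to the (vanishing) additive constants, so that $f_z = L_z + \sum_i \lambda_i g_{i,z} + \sum_j \mu_j h_{j,z}$. Hence it suffices to show $L_z \in \tM(g_{1,z},\ldots,g_{l,z}) + \langle h_{1,z},\ldots,h_{m,z}\rangle^\sim$, because then translating back by $x\mapsto x-z$ and using $f = f_z(\cdot - z) + f(z) \geq f_z(\cdot-z)$ (with $f(z)=f_{\mathrm{min}}$ absorbed) puts $f$ in the desired module; the translation is an automorphism of $\R[x]$ preserving sums of squares of power series, so membership is unaffected.

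Next I would feed $L_z$ through modified Mora's division by $\{\lambda_i g_{i,z}, h_{j,z} \mid \lambda_i\nabla g_i(z)\neq 0\}$. By construction there are $u'$ with $u'(0)=0$, quotients $q'_i$, and a remainder $r'$ with $(1+u')L_z = \sum_i q'_i(\lambda_i g_{i,z}) + \sum_j(\ldots) h_{j,z} + r'$, and the essential remainder $\tr$ is obtained from $r'$ by discarding monomials of degree $>d+1$. Since $1+u', \tfrac{1}{1+u'}\in \sum\R[[x]]^2$, dividing by $1+u'$ shows $L_z$ lies in the module generated by the $g_{i,z}, h_{j,z}$ together with $r'$. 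The hypotheses (1)--(3) are exactly the hypotheses of Theorem \ref{thm:bounded_reg} applied to $r'$ (with $\tr$ playing the role of the low-degree truncation $f_0$, and $d+1$ the cutoff): condition (1) gives that $\Gamma(\tr)$ meets all coordinate axes of the appearing variables, condition (2) gives the interiority of $\tr_\gamma$ on maximal faces, and condition (3) gives regularity of $\tr$'s Newton polyhedron. So Theorem \ref{thm:bounded_reg} yields $r'\in \sum\R[[x]]^2$, hence $r'\in \tM(g_{1,z},\ldots)$ trivially, and therefore $L_z$ is in the module.

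One delicate point I would have to address carefully is that Theorem \ref{thm:bounded_reg} is stated for a polynomial $f$ whose Newton diagram $\Gamma(f)$ meets all coordinate axes and whose low-degree truncation is regular; here $r'$ may genuinely involve fewer variables than $n$, so I would apply the theorem inside the polynomial ring in the variables actually appearing in $\tr$, which is legitimate since $\sum\R[[x]]^2$ membership in a subring implies it in the full ring. A second subtlety is the role of eliminating terms in the ideal generated by the leading monomials of the linear parts of the $g_i,h_j$: those eliminated terms are themselves elements of $\langle g_{1,z},\ldots\rangle^\sim$ modulo higher-order corrections (since the linear part of $g_{i,z}$ is $\nabla g_i(z)\cdot x$ and is a unit times that leading monomial plus higher terms), so they can be moved to the module side. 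The main obstacle is bookkeeping: making precise that after removing (a) the ideal-generated terms $r_0$ contributes, (b) the degree-$>d+1$ tail of $r'$, and (c) the $1+u'$ factor, what remains is exactly $\tr$ plus something already known to lie in $\tM(g_{1,z},\ldots) + \langle h_{1,z},\ldots\rangle^\sim$ — the degree-$>d+1$ tail in particular must be handled, but Theorem \ref{thm:bounded_reg} is designed precisely so that the tail is absorbed automatically once the truncation $\tr$ is regular, so invoking it on $r'$ (not on $\tr$) closes the gap.
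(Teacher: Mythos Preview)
Your argument has a real gap at the point where you assert that ``dividing by $1+u'$ shows $L_z$ lies in the module generated by the $g_{i,z}, h_{j,z}$ together with $r'$.'' Membership in $\tM(g_{1,z},\ldots,g_{l,z})$ requires the coefficients of the $g_{i,z}$ to be sums of squares in $\R[[x]]$, but after dividing the Mora identity by $1+u'$ the coefficient of $g_{i,z}$ is $\dfrac{\lambda_i q'_i}{1+u'}$. Because $\nabla L_z(0)=0$ and each $\lambda_i g_{i,z}$ with $i$ active has a nonzero linear leading term, one does get $q'_i(0)=0$; but a power series with vanishing constant term is \emph{not} a sum of squares in general (think of $q'_i=x_1$). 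So your reduction ``it suffices to show $L_z\in\tM+\langle\cdot\rangle^\sim$'' is logically valid as a sufficient condition, yet your proposed verification of it fails exactly here.

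The paper's proof avoids this by \emph{not} isolating $L_z$. It substitutes the Mora expression for $L_z$ directly into $f_z=\sum_i\lambda_i g_{i,z}+\sum_j\mu_j h_{j,z}+L_z$, so that the coefficient of $g_{i,z}$ for active $i$ becomes $\lambda_i\bigl(1+\tfrac{p_i+p'_i}{1+u}\bigr)$. The crucial observation $p_i(0)=p'_i(0)=0$ (derived from $\deg\LT(L_z)\ge 2$ and the Mora degree bound) now makes this coefficient a unit power series with constant term $\lambda_i>0$, hence a sum of squares. That recombination with the Lagrange multiplier is the missing idea in your sketch. A secondary point: the paper also performs a further division of the high-degree tail $w$ in $\R[[x]]$ before invoking Theorem~\ref{thm:bounded_reg}, producing $\tr+r'$ with $r'$ of order $\ge d+2$; your plan to apply Theorem~\ref{thm:bounded_reg} directly to the polynomial remainder is close in spirit, but you should make explicit that the truncation to degree $\le d+1$ is exactly $\tr$ and that the axes-meeting and interiority hypotheses refer to $\Gamma(\tr)$, not to the possibly larger support of the full remainder.
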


\begin{proof}
For a global minimizer $z$, let $I = \{i\mid \lambda_i\nabla g_i(z)\neq 0\}$. 
By the modified Mora's division, there exist $u, p_i, q_j, \tr, w \in \R[x]$ such that $u(0) =
 0$ and 
\[
(1+u) L_z = \sum_{i\in I} p_i\lambda_ig_{i,z} + \sum_{j=1}^m
 q_j h_{j,z}
 + \tr + w,
\]
where
$\LT(L_z)\geq \LT(p_i\lambda_i g_{i,z}), \LT(q_i h_{j,z})$ in the local order, 
each monomial of $\tr$ can not be divided by $\LT(g_{i,z}), \LT(h_{j,z})$ and $w \in
 \langle g_{i,z}, h_{j,z}\rangle_{i,j}$ and the least degree of $w\geq
 d+2$, where $d$ is the number given in
 the definition of the modified Mora's division.
Since $L(z) = 0, \nabla L(z) = 0$, we have $\deg(\LT(L_z))\geq 2$.
Then
the least degree of the monomials
 of $p_i\lambda g_{i,z}\geq 2$ for all $i\in I$.
Thus the least degree of monomials in $p_i\geq 1$
and hence $p_i(0)=0$ for all $i\in I$. 

Further by the Division theorem in $\R[[x]]$ \cite[Theorem 6.4.1]{GP}, 
there exist
$p', q', r'\in \R[[x]]$ such that
\[
 w = \sum_{i\in I} p'_i\lambda_i g_{i,z} + \sum_j q'_j h_{j,z} + r',
\]
where each monomial of $r'$ can not be divided by $\LT(\lambda_i g_i), \LT(h_j)$ and
 the least degree of $r' \geq d+2$.
Similarly, we have $p'_i(0) = 0$.
Then
\begin{align*}
  f_z & = \sum_{i=1}^l \lambda_ig_{i,z} + \sum_{j=1}^m
	\mu_jh_{j,z} + L_z\\
& = \sum_{i\in I} \lambda_i\left(1 + \frac{p_i + p'_i}{1 + u}\right)g_{i,z} +
 \sum_{j=1}^m 
	\left( \mu_j + \frac{q_j+ q'_j}{1 + u}\right)h_{j,z} \\
& \phantom{\sum_{i\in I} \lambda_i\left(1 + \frac{p_i}{1 + u}\right)}
+ \sum_{i\notin I}\lambda_ig_{i,z} 
+ \frac{\tr + r'}{1 + u}.
\end{align*}
 Since $\tr + r'$ is contained in $\sum\R[x]^2$
 by Theorem \ref{thm:bounded_reg},
 we have $f_z\in \tM(g_{1,z},\ldots,g_{l,z}) + \langle
 h_{1,z},\ldots,h_{m,z}\rangle^\sim$.

\end{proof}

\begin{example}
\begin{align*}
\min &\ f = x^3 + y^3 + z^2 + w^4 + 2\\
\text{s.t. }&\ g = 2 - x^4 - y^4 - z^4 - w^4 \geq 0
\end{align*}
The optimal is $a = (-1,-1,0,0)$. We have 
\[
\nabla f(a) = \frac{3}{4}\nabla g(a),\quad
 \nabla^2\left(f - \frac{3}{4}g\right)(a) 
= \begin{bmatrix}
   3 & 0 & 0 & 0 \\
   0 & 3 & 0 & 0 \\
   0 & 0 & 2 & 0 \\
   0 & 0 & 0 & 0
  \end{bmatrix}.
\]
Thus $\nabla^2\left(f - \frac{3}{4}g\right)(a)$ is
not positive definite on the subspace
\[
 \nabla g(a)^\perp 
= \left\langle\begin{bmatrix}
   4\\
   4\\
   0\\
   0
  \end{bmatrix}\right\rangle^\perp
= \left\langle 
\begin{bmatrix}
 1\\
 -1\\
 0 \\
 0
\end{bmatrix}, 
\begin{bmatrix}
 0 \\
 0\\
 1\\
 0
\end{bmatrix}, 
\begin{bmatrix}
 0\\
 0\\
 0\\
 1
\end{bmatrix}
\right\rangle,
\]
and hence the second order condition is not satisfied.
Let $>$ be the anti-graded rex order. We have
\begin{align*}
 & f_a  = 3x+3y-3x^{2}-3y^{2}+z^{2}+x^{3}+y^{3}+w^{4}\\
 & g_a  = 4x+4y-6x^{2}-6y^{2}+4x^{3}+4y^{3}-x^{4}-y^{4}-z^{4}-w^{4},
\end{align*}
and the remainder of $f_a$ by $g_a$ is
\begin{multline*}
 r =
 3y^{2}+z^{2}+\frac{1}{4}x^{3}-\frac{9}{4}x^{2}y+\frac{9}{4}xy^{2}-\frac{17}{4}y^{3}\\
 -\frac{3}{4}x^{4}+\frac{3}{2}x^{3}y-\frac{3}{2}xy^{3}+\frac{9}{4}y^{4}+\frac{3}{4}z^{4}
+\frac{7}{4}w^{4}+\frac{3}{8}x^{5}-\frac{3}{8}x^{4}y\\
 +\frac{3}{8}xy^{4}+\frac{3}{8}xz^{4}+\frac{3}{8}xw^{4}-\frac{3}{8}y^{5}-\frac{3}{8}yz^{4}-\frac{3}{8}yw^{4}.
\end{multline*}
By eliminating terms of $r$ contained in $\LT\langle g_a\rangle=\langle
 x \rangle$, we obtain
\[
r_0  =
 3y^{2}+z^{2}-\frac{17}{4}y^{3}+\frac{9}{4}y^{4}+\frac{3}{4}z^{4}+\frac{7}{4}w^{4}-\frac{3}{8}y^{5}-\frac{3}{8}yz^{4}-\frac{3}{8}yw^{4}.
 \]
For $\Gamma:=\Gamma(r_0)$, 
\[
 r_{0,\Gamma} = 3y^2 + z^2 + \frac{7}{4}w^4
\]
and $\deg r_{0,\Gamma} = 4$, 
Then the essential remainder $\widehat{r} = r_0$
and $\widehat{r}_{\Gamma} = r_{0,\Gamma} \in
 \rint\sum\R[x,y,z,w]_{\frac{1}{2}\Gamma}^2$.
Since the Newton diagram of $\tr$ satisfies the conditions of 
Theorem \ref{thm:almost_quad},
$\tr$ has a regular Newton polyhedron.
By Theorem \ref{thm:constrained}, we have $f\in \tM(g)$. 
\end{example}

\end{document}